\newtheorem{theorem}{Theorem}
\newtheorem{proposition}{Proposition}
\newtheorem{corollary}{Corollary}
\newtheorem{assumption}{Assumption}
\theoremstyle{definition}
\newtheorem{definition}{Definition}
\newtheorem{remark}{Remark}
\newcommand{\ds}{\displaystyle}
\newcommand{\E}{\mathbb{E}}
\renewcommand{\P}{\mathbb{P}}
\newcommand{\R}{\mathbb{R}}
\newcommand{\M}{\mathbb{M}}
\newcommand{\cC}{\mathcal{C}}
\newcommand{\cE}{\mathcal{E}}
\newcommand{\cF}{\mathcal{F}}
\newcommand{\cL}{\mathcal{L}}
\newcommand{\cN}{\mathcal{N}}
\newcommand{\cB}{\mathcal{B}}
\newcommand{\pare}[1]{\left ( #1 \right )}
\newcommand{\croc}[1]{\left [ #1 \right ]}
\newcommand\indi[1]{{\mathbb{I}}_{\mathnormal{#1}}}
\def \MR{{\rm MR}}
\def \cD{{\cal D}}
\def\1{\mathbf{1}}
\begin{document}


\title{Stochastic processes associated to multidimensional parabolic transmission problems in divergence form\thanks{Supported
    by the Labex B\'ezout.}} 

\author{%
  Pierre~Etor\'e\footnote{Universit\'e Grenoble-Alpes, LJK.
    \textsf{pierre.etore@univ-grenoble-alpes.fr}}
  \and 
 Miguel~Martinez\footnote{Universit\'e Gustave Eiffel, LAMA
    France. \textsf{miguel.math@u-pem.fr}}}

\maketitle

\begin{abstract}
{In this note we define and study the stochastic process $X$ in link with a parabolic transmission operator $(A,\cD(A))$ in divergence form. The transmission operator involves a diffraction condition along a transmission boundary. To that aim we gather and clarify some results coming from the theory of Dirichlet forms as exposed in \cite{Fukushima-et-al-2011} and \cite{Stroock-1988} for general divergence form operators. We show that $X$ is a semimartingale and that it is solution of a stochastic differential equation involving partial reflections in the co-normal directions along the transmission boundary.}
\end{abstract}




\section{Introduction}
\label{sec:introduction}
In this note we aim at tying - with all the necessary rigor - various theoretical results that come from different approaches concerning the probabilistic study of divergence form operators. We also provide a probabilistic representation of the underlying process when the divergence operator is a transmission operator involving a transmission condition across some smooth interface~:~in this particular case, we show that the process is solution of a stochastic differential equation involving partial reflections in the co-normal directions along the transmission boundary. All of these results are natural but we could not find them in the existing literature and would like to record them in print with a proof as they ground the foundations for the study of probabilistic numerical methods for transmission problems (see e.g. \cite{Bossy-al-2010, Etore-Martinez-2020}).
\vspace{0,4 cm}

Our starting point is $a~:~\R^d \rightarrow {\cal M}_{d}\pare{\R}$ some measurable symmetric matrix valued coefficient satisfying the following ellipticity and boundedness condition $({\bf E-B})$~:~

\begin{assumption} \;{\rm ({\bf E-B})~:}
There exists $\lambda, \Lambda\in (0,\infty)$ such that 
\begin{equation}
\label{eq:ell}
\forall x\in \R^d,\;\;\forall \xi\in\R^d,\quad \lambda|\xi|^2\leq \xi^\ast a(x)\xi \leq \Lambda|\xi|^2.
\end{equation}
\end{assumption}

Let us associate to the coefficient $a$ the following unbounded operator $A:\cD(A)\subset L^2(\R^d)\to L^2(\R^d)$ defined by
$$
\cD(A)=\big\{  u\in H^1(\R^d)\text{ with }\sum_{i,j=1}^dD_i(a_{ij}D_ju)\in L^2(\R^d)  \big\}
$$
and
$$
\forall u\in\cD(A), \quad Au=\sum_{i,j=1}^dD_i(a_{ij}D_ju).$$

There exists a closed symmetric Dirichlet form $({\cal E}, {\cal D}[{\cal E}])$ and its corresponding semigroup $(T_t)$ on $L^2(\R^d)$ that are naturally in link with $(A,\cD(A))$. 

We define rigorously these objects and study their relations in Section \ref{sec-dirichlet1}. Using the spectral resolution of the identity associated to $(A,\cD(A))$, we study the regularity in the 'time variable $t$' of $\cE(T_tf,g)$, $f\in L^2(\R^d), g\in\cD[\cE]$ (Subsection \ref{ssec-contruction}). This permits to establish rigorously in Subsection \ref{ssec-stroock} the connection with the results in \cite{Stroock-1988} that are exposed by D.W. Stroock in the $C_b(\R^d)$ setting (Feller semigroup) and to assert the validity of Aronson's estimates for the transition function of $(T_t)$ (see \cite{Aronson-1967}, \cite{Stroock-1988}, \cite{bass}).

We then aim at providing tractable (from a numerical perspective) stochastic representations for the Hunt process $X$ associated to $({\cal E}, {\cal D}[{\cal E}])$. 

Of course, we are in the ideal setting to apply the stochastic calculus for symmetric Dirichlet forms, and we give the Fukushima decomposition of $X$ {\it via} the Revuz correspondence for additive functionals that is presented in Subsection \ref{Sec:Fuku-decomp}. In this general setting a representation of $X$ may be also provided by the so-called Lyons-Zheng decomposition involving reversed-time martingale increments~:~we give a brief insight of the ideas behind this theoretical decomposition in Subsection \ref{Sec:Lyons-Zheng}. Please note that none of the results presented in Section \ref{Sec:stochastic-process} are new (see \cite{Fukushima-et-al-2011}, \cite{Lyons-Zheng}) and we have tried our best to present the ideas in a coherent and assimilable way for a reader that might not be familiar with the subject. 

Then, gradually moving from broad issues to more specific ones,
we focus in Section \ref{sec-dirichlet2} on the particular case where $(A,\cD(A))$ is a transmission operator across some transmission boundary : we present a Skorokhod representation of the Hunt process $X$ associated to $({\cal E}, {\cal D}[{\cal E}])$ in this case. This result is new and constitutes the main contribution of this note (see Subsection \ref{ssec-skoro}). Finally, we give a special attention to the particular case of a diagonal coefficient matrix $a$ that remains constant on each side of the transmission boundary and compare our result to the one obtained in the pionneering paper \cite{Bossy-al-2010} (see Subsection \ref{sec:bossy-al}). We show that, when reduced to this very specific context, our description essentially matches the stochastic differential equation considered in \cite{Bossy-al-2010}.

\section{Dirichlet form and Markovian semigroup associated to general elliptic divergence form operators}
\label{sec-dirichlet1}

\subsection{Definitions and first properties}
\label{ssec-contruction}

To the coefficient matrix $a$, we may associate a closed symmetric Dirichlet form $({\cal E}, {\cal D}[{\cal E}])$  defined on~$L^{2}(\R^d)$  by
\begin{align*}
\left \{
\begin{array}{lll}
{\cal D}\croc{{\cal E}} &=& H^{1}(\R^d),\\
\\
{\cal E}(u,v) &=&\ds \sum_{i,j=1}^d\int_{\R^d} a_{ij}\,D_ju\,D_iv,\hspace{0.5 cm}u,v\in {\cal D}\croc{{\cal E}}
\end{array}
\right .
\end{align*}
(see \cite{Fukushima-et-al-2011}, p111). This closed symmetric Dirichlet form is the starting point of our construction. 

On the underlying Hilbert space $L^{2}(\R^d)$, we denote within this subsection by $(A, {\cal D}(A))$ the (unique) self-adjoint operator associated to $({\cal E}, {\cal D}\croc{{\cal E}})$ and characterized by
\begin{align*}
\left \{
\begin{array}{ll}
{\cal D}( A)\subset {\cal D}\croc{{\cal E}},\\
{\cal E}(u,v) = -\langle Au,v\rangle_{L^2(\R^d)},\hspace{0.3 cm}u\in {\cal D}(A),\,v\in {\cal D}\croc{{\cal E}}
\end{array}
\right .
\end{align*}
 (\cite{Fukushima-et-al-2011}, Theorem 1.3.1 and Corollary 1.3.1 p.21). 

\vspace{0.5 cm}

We aim at identifying this operator - as expected it will turn out that~$(A,\cD(A))$ is nothing else than the operator defined in the Introduction, therefore the common notation.

By the very definition of $(A, {\cal D}(A))$, we have for any $f\in {\cal D}(A)$ and any $g\in C^\infty_c(\R^d)$
$$
-\langle Af, g \rangle_{L^2(\R^d)}= {\cal E}(f, g) =  \sum_{i,j=1}^d\int_{\R^d} a_{ij}\,D_jf\,D_ig
 = -\Big\langle \sum_{i,j=1}^d D_i(a_{ij}D_jf) , g\Big\rangle_{H^{-1}(\R^d),H^1(\R^d)}
$$
where $\sum_{i,j=1}^d D_i(a_{ij}D_jf)$ is understood  in the distributional sense as an element of $H^{-1}(\R^d)$. 
But as $Af\in L^2(\R^d)$ by the definition of $\cD(A)$ the above equality shows that $\sum_{i,j=1}^d D_i(a_{ij}D_jf)\in L^2(\R^d)$
(for any $f\in  \cD(A)$).

Thus, it is proved that ${\cal D}(A)\subseteq \{f\in H^{1}(\R^d)\text{ with } \sum_{i,j=1}^d D_i(a_{ij}D_jf)\in L^2(\R^d)\}$.

In turn (by the density of $C^\infty_c(\R^d)$ in $L^{2}(\R^d)$) the equality permits to identify for any $f\in {\cal D}(A)$,  
$$
Af= \sum_{i,j=1}^d D_i(a_{ij}D_jf).
$$

Let us now prove the reverse inclusion $\{f\in H^{1}(\R^d)\text{ with } \sum_{i,j=1}^d D_i(a_{ij}D_jf)\in L^2(\R^d)\}\subseteq {\cal D}(A)$.

Let $f\in \{f\in H^{1}(\R^d)\text{ with } \sum_{i,j=1}^d D_i(a_{ij}D_jf) \in L^2(\R^d)\}$. 
By the symmetry of the coefficient matrix~$a$ and  integration by parts, it is not hard to prove that for any $v\in {\cal D}(A)$, 
$$\langle Av,f\rangle_{L^2(\R^d)} = -\cE(v,f)= -\sum_{j,i=1}^d\int_{\R^d}a_{ji}D_ifD_jv= \Big\langle \sum_{j,i}D_j(a_{ji}D_if),v\Big\rangle_{L^2(\R^d)}$$ 
and in particular $f\in {\cal D}(A^\ast) \stackrel{\text{def}}{=}\{g\in L^{2}(\R^d)~|~\exists h_g \in L^{2}(\R^d)\;\;\text{s.t.}\;\langle Av,g\rangle = \langle v,h_g\rangle, \forall \,v\in {\cal D}(A)\}$ (see \cite{pazy}). So that we get the reverse inclusion
$$
\{f\in H^{1}(\R^d)  \text{ with } \sum_{i,j=1}^d D_i(a_{ij}D_jf)\in L^2(\R^d)\} \subseteq {\cal D}(A^\ast) = {\cal D}(A)
$$
where the equality comes from the fact that $(A, {\cal D}(A))$ is  self-adjoint. 
Finally, we have proved
\begin{equation}
{\cal D}(A)=\{f\in H^{1}(\R^d)  \text{ with } \sum_{i,j=1}^d D_i(a_{ij}D_jf) \in L^2(\R^d)\}
\end{equation}
and $(A,\cD(A))$ is fully identified as being the same operator of the Introduction \ref{sec:introduction}.

Note that since $a$ is only assumed to be measurable, $C_c^\infty(\R^d)$ - which is a core for the Dirichlet form $({\cal E}, {\cal D}({\cal E}))$ - is not even a subset of ${\cal D}(A)$.

\vspace{0.5cm}

We now turn to the study of the spectral resolution and the semigroup associated to $(\cE,\cD[\cE])$ and $(A,\cD(A))$. For the sake of conciseness we denote $(\cdot,\cdot)=\langle \cdot,\cdot\rangle_{L^2(\R^d)}$ and $||\cdot||=||\cdot||_{L^2(\R^d)}$ till the end of the section.

Since $(-A, {\cal D}(A))$ is a self-adjoint operator on the Hilbert space $L^{2}(\R^d)$ that is non-negative definite, it admits a spectral resolution of the identity $\{E_\gamma~:~\gamma \in [0, \infty)\}$. For any $\gamma\geq 0$ the operator $E_\gamma:L^2(\R^d)\to L^2(\R^d)$ is a self-adjoint projection operator with $(E_\gamma f,f)\geq 0$, $f\in L^2(\R^d)$, and the $E_\gamma$'s form a spectral family with in particular $E_\mu E_\gamma=E_{\mu\wedge \gamma}$, (see \cite{Fukushima-et-al-2011} p18 for a list of properties). The link with $(-A, {\cal D}(A))$ is through 
$$
(-Af,g)=\int_{[0,\infty)} \gamma d(E_\gamma f,g)\hspace{0,4 cm}\forall f\in {\cal D}(A),\;g\in L^{2}(\R^d)
$$
and ${\cal D}(A) = \left \{f\in L^{2}(\R^d)~:~\int_{[0, \infty)} \gamma^2 d(E_\gamma f,f)<\infty\right \}$
(see \cite{Fukushima-et-al-2011} paragraph 1.3.4 p.18).

Consequently, the family of operators $\{T_t\stackrel{\text{def}}{=} {\rm e}^{tA}~:~t>0\}$ is a strongly continuous semigroup of self-adjoint contractions acting on $L^{2}(\R^d)$ (\cite{Fukushima-et-al-2011} Lemma 1.3.2 p.19) and 
$$
(T_t f,g) = \int_{[0,\infty)} {\rm e}^{-\gamma t} d(E_\gamma f,g)\hspace{0,4 cm}\forall f\in L^{2}(\R^d),\;g\in L^{2}(\R^d).
$$
Note that for any $\gamma\geq 0$, $t>0$, and any functions $f\in L^{2}(\R^d)$ and $g\in L^{2}(\R^d)$, we have the commutation property
\begin{align*}
(T_t E_\gamma f, g) &= (E_\gamma f,T_t g)\\
&= \int_{[0,\infty)}{\rm e}^{-\xi t}d_\xi(E_\gamma f, E_\xi g)=
\int_{[0,\gamma]}{\rm e}^{-\xi t}d_\xi(E_\xi E_\gamma f,  g)+\int_{[\gamma,\infty)}{\rm e}^{-\xi t}d_\xi(E_\xi E_\gamma f,  g)\\
&=\int_{[0,\gamma]}{\rm e}^{-\xi t}d_\xi(E_\xi f, g)=\int_{[0,\infty)}{\rm e}^{-\xi t}d_\xi(E_\xi f,  E_\gamma g) =(T_t f, E_\gamma g)=(E_\gamma T_t f, g).
\end{align*}
Note also that for any $f\in L^{2}(\R^d)$ and any $t>0$,
\begin{align*}
\int_{[0, \infty)} \gamma^2 d(E_\gamma T_t f,T_t f)&
=\int_{[0, \infty)} \gamma^2 d_\gamma\pare{\int_{[0, \infty)} {\rm e}^{-\xi t} d_\xi(E_\gamma E_\xi f, T_t f)}\\
&=\int_{[0, \infty)} \gamma^2 d_\gamma\pare{\int_{[0, \infty)} {\rm e}^{-\xi t} d_\xi\pare{\int_{[0,\infty)}{\rm e}^{-\theta t}d_{\theta}(E_\gamma E_\xi f, E_\theta f)}}\\
&=\int_{[0, \infty)} \gamma^2 {\rm e}^{-2\gamma t} d_\gamma(E_\gamma f, f)\\
&\leq \frac{4}{t^2}{\rm e}^{-2}\int_{[0, \infty)} {\rm e}^{-\gamma t} d_\gamma(E_\gamma f, f) = \frac{4}{t^2}{\rm e}^{-2}(T_t f, f)\leq \frac{4}{t^2}{\rm e}^{-2}||f||^2<+\infty,
\end{align*}
where we have used the spectral family property, the associativity of the Stieltjes integral and the inequality
$\gamma^2 \mathrm{e}^{-\gamma t}\leq 4 \mathrm{e}^{-2}/t^2$.
The above inequality ensures that $T_t f \in {\cal D}(A)$ for any $t>0$.

From the fact that $|\frac{d}{dt}{\rm e}^{-\gamma t}|\leq \gamma$ is integrable w.r.t. $d(E_\gamma h,g)$ whenever $h\in {\cal D}(A)$, we deduce from the commutation property that for any $f,g\in L^{2}(\R^d)$ and for any $s>0$
\begin{align*}
-\frac{d}{dt}(T_t f,T_s g) = \int_{[0,\infty)} \gamma {\rm e}^{-\gamma t} d(E_\gamma f, T_s g)\xrightarrow[t\searrow 0+]{} \int_{[0,\infty)} \gamma d(E_\gamma T_s f, g) =  (-A T_s f, g)
\end{align*}
where the limit exists and is well defined (since we have shown that $T_s f \in {\cal D}(A)$). 

If moreover $g\in {\cal D}\croc{{\cal E}}$ then
\begin{align}
\label{eq:deriv-form}
-\frac{d}{ds}(T_{s} f,g) &= -\frac{d}{dt}(T_{s+t} f,g)|_{t=0+} = -\frac{d}{dt}(T_t f,T_s g)|_{t=0+}\nonumber\\
&= (-A T_s f, g) = {\cal E}(T_sf, g).
\end{align}
And since $-\frac{d}{ds}(T_{s} f,g) = -\frac{d}{ds}(T_s g, f)$ by the symmetry property of $T_s$, we deduce that ${\cal E}(T_sf, g) = {\cal E}(f, T_s g)$ for any $f,g \in {\cal D}\croc{{\cal E}}$. 

Consequently, for any $f\in {\cal D}\croc{{\cal E}}$ and using the ellipticity of the coefficient matrix $a$,
\begin{align*}
\lambda||\nabla T_s f||^2\leq &\,\,{\cal E}(T_s f, T_s f) = {\cal E}(T_{2s}f, f)=(-AT_{2s}f, f)\\
&= \int_{[0,\infty)}\gamma {\rm e}^{-2\gamma s}d(E_\gamma f, f) \leq \frac{{\rm e}^{-1}}{s}\int_{[0,\infty)}{\rm e}^{-\gamma s}d(E_\gamma f, f) = \frac{{\rm e}^{-1}}{s}(T_s f, f)\leq \frac{||f||^{2}}{s},
\end{align*}
from which we deduce the fundamental estimate
\begin{equation}
\label{eq:fund-estimate-T}
||\nabla T_s f|| \leq \frac{||f||}{\sqrt{\lambda \, s}},\,\,\;\;\;\;\forall s>0.
\end{equation}
In turn this estimate implies that for any $f\in L^{2}(\R^d)$, $g\in {\cal D}\croc{{\cal E}}$, the function
$$
s\mapsto {\cal E}(T_s f, g)\;\;\text{is integrable on}\;(0,t],
$$
and from \eqref{eq:deriv-form} and the right continuity of $s\mapsto T_s f$ at time $s=0+$ (one may extend $T_0f = f$ as long as no differentiation of $s\mapsto T_s f$ is implied at $s=0+$ when $f\notin {\cal D}(A)$), we deduce the integrated version of \eqref{eq:deriv-form} namely 

$\forall f\in L^{2}(\R^d),\; \forall g\in {\cal D}\croc{{\cal E}}$,
\begin{equation}
\label{eq-stroock-T}
(T_{t} f,g) - (f,g) = -\int_{0}^t\sum_{i,j=1}^d\pare{a_{ij}D_j T_s f, D_ig}ds = -\int_{0}^t{\cal E}(T_s f, g)ds,\hspace{0,3 cm}t\in (0, \infty).
\end{equation}

\subsection{Link with the results of D.W. Stroock \cite{Stroock-1988}}
\label{ssec-stroock}

In his celebrated article {\it Diffusion semigroups corresponding to uniformly elliptic divergence form operators} D.W. Stroock constructs via a regularization procedure a Feller continuous semigroup 
$\{P_t~:~t>0\}$ associated to $a$ with the properties that (with our notations)
\begin{enumerate}
\item the map $t\in [0,\infty)\mapsto P_t\phi \in H^1(\R^d)$ is a weakly continuous map for each $\phi\in C_c^{\infty}(\R^d)$.
\item $\forall \phi, \psi \in C_c^{\infty}(\R^d)$, 
\begin{equation}
\label{eq-stroock-P}
(P_{t} \phi,\psi) - (\phi,\psi) = -\int_{0}^t\pare{a\nabla P_s \phi, \nabla\psi}ds = -\int_{0}^t{\cal E}(P_s \phi, \psi)ds,\hspace{0,3 cm}t\in (0, \infty).
\end{equation}
(Nota~:~please note that there is a sign error in the original version of \cite{Stroock-1988}).
\end{enumerate}

In fact, $\{P_t~:~t>0\}$ determines a unique strongly continuous semigroup $\{\bar{P}_t~:~t>0\}$ of self-adjoint contractions on $L^{2}(\R^d)$. 

The aim of this subsection is to prove the following equality, which clarifies the relationship between the results obtained in \cite{Stroock-1988} and the those provided by the theory of Dirichlet forms \cite{Fukushima-et-al-2011}.
\begin{proposition}
\begin{equation}
\label{eq:equality-sg}
\{\bar{P}_t~:~t>0\} = \{T_t~:~t>0\}\hspace{0,6cm}\text{on}\;\;L^{2}(\R^d).
\end{equation} 
\end{proposition}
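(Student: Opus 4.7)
The plan is to compare, for a fixed test function $\phi \in C_c^\infty(\R^d)$, the two trajectories $t \mapsto T_t\phi$ and $t \mapsto \bar P_t\phi$ in $L^2(\R^d)$. By the integrated identities \eqref{eq-stroock-T} and \eqref{eq-stroock-P}, both satisfy the same abstract weak parabolic equation with common initial datum $\phi$, and a Lions--Magenes type uniqueness argument will force $T_t\phi = \bar P_t\phi$. Since $C_c^\infty(\R^d)$ is dense in $L^2(\R^d)$ and both semigroups act as contractions on $L^{2}(\R^d)$, this identity on a dense subset extends by continuity to all of $L^{2}(\R^d)$, which is the content of \eqref{eq:equality-sg}.

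Setting $w(t):=T_t\phi-\bar P_t\phi$, I would first subtract \eqref{eq-stroock-P} from \eqref{eq-stroock-T} and extend the resulting identity from $\psi\in C_c^\infty(\R^d)$ to arbitrary $\psi\in H^1(\R^d)=\cD[\cE]$. This extension rests on the continuity of $\cE(\cdot,\psi)$ on $H^1(\R^d)$ and on the local boundedness of $s\mapsto \|T_s\phi\|_{H^1}+\|\bar P_s\phi\|_{H^1}$: for $T_s\phi$, the spectral representation developed in Subsection \ref{ssec-contruction} yields $\cE(T_s\phi,T_s\phi)=\int_{[0,\infty)}\gamma\,{\rm e}^{-2\gamma s}\,d(E_\gamma\phi,\phi)\leq\cE(\phi,\phi)$ uniformly in $s\geq 0$, while for $\bar P_s\phi$ Stroock's weak $H^1$-continuity combined with the uniform boundedness principle gives $\sup_{s\in[0,T]}\|\bar P_s\phi\|_{H^1}<\infty$ for every $T>0$. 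The resulting identity
\begin{equation*}
(w(t),\psi)=-\int_0^t\cE(w(s),\psi)\,ds,\qquad\forall\psi\in H^1(\R^d),\ \forall t\geq 0,
\end{equation*}
together with $w(0)=0$, then places $w$ in $L^2_{loc}([0,\infty);H^1(\R^d))$ with distributional derivative $w'\in L^2_{loc}([0,\infty);H^{-1}(\R^d))$ satisfying $\langle w'(t),\psi\rangle_{H^{-1},H^1}=-\cE(w(t),\psi)$ for a.e.\ $t$.

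At this point the Lions--Magenes chain rule gives $\tfrac{d}{dt}\|w(t)\|^2=2\langle w'(t),w(t)\rangle_{H^{-1},H^1}=-2\cE(w(t),w(t))\leq 0$ for almost every $t\geq 0$, which combined with $w(0)=0$ forces $w\equiv 0$. The main obstacle is the careful extension of the weak identity to $H^1$-test functions and the rigorous invocation of the Lions--Magenes chain rule in this abstract parabolic setting; once these analytic points are settled, the energy estimate and the passage from $C_c^\infty(\R^d)$ to $L^{2}(\R^d)$ are routine.
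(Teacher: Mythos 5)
Your argument is correct, but it follows a genuinely different route from the paper. The paper proves the identity by the classical duality trick: it shows that $s\mapsto (T_s\phi,\bar P_{t-s}\psi)$ is constant on $(0,t)$, by first differentiating the two integrated identities \eqref{eq-stroock-T} and \eqref{eq-stroock-P} separately (the continuity of $s\mapsto\cE(\bar P_s\phi,\psi)$ coming from Stroock's estimate \eqref{eq:fund-estimate-P}) and then justifying the product rule through a careful bound on the cross term $h^{-1}\big({T}_{s+h}\phi - T_s\phi, \bar{P}_{t-s+h}\psi - \bar{P}_{t-s}\psi\big)$ via the spectral calculus; integrating over $(0,t)$ and using self-adjointness and density then yields \eqref{eq:equality-sg}. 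You instead run the standard energy/uniqueness argument for the abstract Cauchy problem: the difference $w=T_\cdot\phi-\bar P_\cdot\phi$ satisfies the homogeneous weak equation with $w(0)=0$, and the Lions--Magenes chain rule in the Gelfand triple $H^1\hookrightarrow L^2\hookrightarrow H^{-1}$ gives $\|w(t)\|^2=-2\int_0^t\cE(w(s),w(s))\,ds\leq 0$. The analytic ingredients you need are sound: the uniform bound $\cE(T_s\phi,T_s\phi)\leq\cE(\phi,\phi)$ does follow from the spectral representation, the bound on $\|\nabla\bar P_s\phi\|$ is even available directly from \eqref{eq:fund-estimate-P} (so the uniform boundedness principle is not really needed), the extension of the test class from $C_c^\infty(\R^d)$ to $H^1(\R^d)$ goes through by the boundedness of $\cE$ and these gradient bounds, and strong measurability of the $H^1$-valued trajectories follows from weak continuity plus separability (Pettis). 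The trade-off: the paper's route stays within scalar calculus and the spectral theorem, at the price of the somewhat delicate justification of \eqref{eq:derivation-produit}; your route imports the vector-valued Sobolev embedding $W^{1,2}(0,T;H^1,H^{-1})\hookrightarrow C([0,T];L^2)$ and its chain rule as a black box, but is more mechanical once that is granted and does not use the self-adjointness of the semigroups in the final identification, so it would transfer to non-symmetric forms.
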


\begin{proof}
The semigroup $\{\bar{P}_t~:~t>0\}$ is strongly continuous on $H^{1}(\R^d)$. Moreover, for each $t>0$, $\bar{P}_t$ maps $L^{2}(\R^d)$ into $H^{1}(\R^d)$ and for each $f\in H^{1}(\R^d) = {\cal D}\croc{{\cal E}}$, we have the fundamental estimate
\begin{equation}
\label{eq:fund-estimate-P}
||\nabla \bar{P}_s f|| \leq \frac{1}{\sqrt{\lambda}}\pare{\frac{||f||}{\sqrt{s}}}\wedge ||\nabla f||,\,\,\;\;\;\;\forall s>0.
\end{equation}

(See \cite{Stroock-1988} Theorem II.3.1. p.341).
 
This estimate implies that for for each $f,g\in {\cal D}\croc{{\cal E}}$ and any $t,s>0$, 
\begin{align*}
|{\cal E}(\bar{P}_t f, g) - {\cal E}(\bar{P}_s f, g)|\leq {\Lambda}||\nabla g||\frac{||\bar{P}_{t\vee s - t\wedge s}f - f||}{\sqrt{\lambda\,(t\wedge s)}}\xrightarrow[s\rightarrow t]{} 0,
\end{align*}
which ensures the continuity of $s\mapsto {\cal E}(\bar{P}_s \phi, \psi)$ for any $\phi, \psi \in C_c^{\infty}(\R^d)$. Since $(\bar{P}_t)_{t>0}$ and $({P}_t)_{t>0}$ co\"incide on $C_c^\infty(\R^d)$, we may differentiate in \eqref{eq-stroock-P} (as long as $t>0$) to find that
\begin{equation}
\label{eq:deriv-form-P}
\frac{d}{dt}(\bar{P}_{t} \phi,\psi) = -{\cal E}(\bar{P}_t \phi, \psi),\hspace{0,3 cm}t\in (0, \infty).
\end{equation}
This has to be compared to \eqref{eq:deriv-form}.

Let us now justify rigorously that for any $t>0$, $s\in (0,t)$ and $\phi,\psi\in C_c^{\infty}(\R^d)$, 
\begin{equation}
\label{eq:derivation-produit}
\frac{d}{ds}(T_{s}\phi, \bar{P}_{t-s}\psi) = \frac{d}{du}(T_{u}\phi, \bar{P}_{t-s}\psi)|_{u=s} - \frac{d}{du}(T_{s}\phi, \bar{P}_{t-u}\psi)|_{u=s}.
\end{equation}
We have for sufficiently small $0\leq h<t-s$ and using the strong continuity of $(\overline{P}_t)_{t>0}$
\begin{align*}
|\pare{{T}_{s+h}\phi - T_s\phi, \bar{P}_{t-s+h}\psi - \bar{P}_{t-s}\psi}|&\leq ||\bar{P}_{t-s+h}\psi - \bar{P}_{t-s}\psi||\,||{T}_{s+h}\phi - T_s\phi||\\
&\leq \varepsilon_\psi(h)\pare{\int_{[0,\infty)}{\rm e}^{-2\gamma s}({\rm e}^{-\gamma h} - 1)^2 d(E_\gamma \phi, \phi)}^{1/2}\\
&\leq \varepsilon_\psi(h)\pare{\int_{[0,\infty)}{\rm e}^{-2\gamma s}(\gamma h)^2 d(E_\gamma \phi, \phi)}^{1/2}\\
&\leq h\varepsilon_\psi(h)\pare{\int_{[0,\infty)}{\rm e}^{-\gamma s}\pare{\gamma^2{\rm e}^{-\gamma s}} d(E_\gamma \phi, \phi)}^{1/2}\\
&\leq h\varepsilon_\psi(h)\frac{2{\rm e}^{-1}}{s}||\phi||,
\end{align*}
where as usual $\varepsilon_\psi(\cdot)$ denotes some positive continuous function vanishing at zero.
We deduce that
$$
\frac{1}{h}|\pare{{T}_{s+h}\phi - T_s\phi, \bar{P}_{t-s+h}\psi - \bar{P}_{t-s}\psi}|\xrightarrow[h\rightarrow 0]{}0,
$$
implying \eqref{eq:derivation-produit}.

Hence, from \eqref{eq:derivation-produit} and applying \eqref{eq:deriv-form} and \eqref{eq:deriv-form-P}, we have that
\begin{align}
\label{eq:uniqueness-proof}
\frac{d}{ds}(T_{s}\phi, \bar{P}_{t-s}\psi)=0,\hspace{0,5 cm}s\in (0,t).
\end{align}

Integrating the identity \eqref{eq:uniqueness-proof} on $(0,t)$ and using the time continuity of both semigroups $(T_t)$ and $(\bar{P}_t)$ up to time $s=0+$ gives
\begin{equation}
\label{eq:uniqueness-semigroup}
(T_t\phi, \psi) = (\phi, \bar{P}_t\psi) = (\bar{P}_t\phi, \psi)
\end{equation}
which holds for any $\phi, \psi \in C_c^{\infty}(\R^d)$. Since $C_c^{\infty}(\R^d)$ is dense in $L^2(\R^d)$, using the strong continuity of both semigroups $(T_t)$ and $(\bar{P}_t)$, we finally deduce from \eqref{eq:uniqueness-semigroup} the identification \eqref{eq:equality-sg}.
\end{proof}

Consequently, all results in \cite{Stroock-1988} that are valid for $\{\bar{P}_t~:~t>0\}$ are true for $\{T_t~:~t>0\}$. For example, identifying abusively $\{T_t~:~t>0\}$ with its Feller restriction $\{P_t~:~t>0\}$ on $C^\infty_c(\R^d)$, we deduce that there is a $p\in C\pare{(0,\infty)\times \R^d\times \R^d}$ such that
\begin{equation}
\label{eq:fundamental-solution}
\croc{T_t\phi}(x) = \int_{\R^d}\phi(y)p(t,x,y)dy,\hspace{0,6cm}\ell(dx)-{\rm a.e.},\;\;\;\;\phi\in C_c^\infty(\R^d).
\end{equation}
Moreover, the fundamental function $p$ satisfies the well-known Aronson's estimates for the fundamental solutions of elliptic divergence form operators, namely there exists a constant $M(\lambda, \Lambda, d)\in [1, \infty)$ such that
\begin{equation}
\label{eq:Aronson-estimates}
\frac{1}{Mt^{d/2}}\exp\pare{-M|x-y|^2/t}\leq p(t,x,y)\leq \frac{M}{t^{d/2}}\exp\pare{-|x-y|^2/Mt}.
\end{equation}

Finally, we have the convergence result of \cite{Stroock-1988} (Theorem II.3.1. p.341) that we state roughly without introducing the necessary notations (see \cite{Stroock-1988} for details)~:~if $\{a_n\}_1^\infty\subset{\cal A}(\lambda, \Lambda)$
and $a_n\longrightarrow a$ almost everywhere, then $p^n(t,x,y)\longrightarrow p(t,x,y)$ uniformly on compacts (in $(0,\infty)\times \R^d\times \R^d$) and for each $t\in [0,\infty)$ and $\phi\in C_c^\infty(\R^d)$, $T_t^n\phi\longrightarrow T_t\phi$ in $H^1(\R^ d)$.

\section{Associated stochastic processes}
\label{Sec:stochastic-process}

Since $({\cal E}, {\cal D}\croc{\cal E})$ is a regular Dirichlet form (with the space ${\cal D}\croc{\cal E}\cap C_c(\R^d)$ or $C_c^\infty(\R^d)$ as a special standard core, see e.g. Exercice 1.4.1 in \cite{Fukushima-et-al-2011}), we are in position to apply Theorem 7.2.1 p. 380 of \cite{Fukushima-et-al-2011}. 

We may associate to $({\cal E}, {\cal D}\croc{\cal E})$ and its corresponding semigroup $(T_t)$ a Hunt process, symmetric w.r.t the Lebesgue measure $\ell(dx)$ on $\R^d$. We shall denote by
$\M = \pare{\Omega, ({\cal F}_t)_{t\geq 0}, {\cal F}, (X_t)_{t\geq 0}, (P^x)_{x\in \R^d}}$ this Hunt process, with $X = (X^1, \dots, X^d)$.
The correspondence with $({\cal E}, {\cal D}\croc{\cal E})$ and $(T_t)$ is through
\begin{equation}
\label{eq:Tt-X}
E^x[f(X_t)]=T_tf(x),\quad\forall f\in L^2(\R^d),\;\forall t\geq 0, \forall x\in \R^d
\end{equation}
(see the discussion p160 in \cite{Fukushima-et-al-2011}, at the beginning of Section 4.2).

The aim of this section is to present various representations of $\M$ in various contexts. We start with the general case and then present a Skorokhod decomposition in the specific case where our Dirichlet form is associated to a transmission parabolic operator in divergence form.

\subsection{The Revuz correspondence for additive functionals and the Fukushima decomposition}
\label{Sec:Fuku-decomp}
\subsubsection{A reminder of the Revuz correspondence for additive functionals}
Let us also denote by $\{R_{\alpha}~:~\alpha>0\}$ the Markovian resolvent kernel of the Markovian transition function $\{\hat{p}(t,x,dy):= p(t,x,y)dy~:~t>0\}$.
Then, for any $\alpha >0$, $f\in {\cal B}_b(\R^d)$ and $x\in \R^d$, $R_\alpha f(x) = \int_{\R^d} r_{\alpha}(x,y)f(y) dy$ with $r_{\alpha}(x,y) = \int_0^\infty {\rm e}^{-\alpha t}p(t,x,y)dt$.

Denote by $S$ the set of positive Radon measures on $(\R^d, {\cal B}(\R^d))$. For $\mu\in S$ define $R_{1}\mu(x) = \int_{\R^d} r_1(x,y)\mu(dy)$ ($x\in \R^d$) and introduce the subset of {\it finite energy measures}
\begin{align*}
S_0&:= \left \{\mu \in S~:~\sup_{v\in {\cal D}\croc{{\cal E}}\cap C_c(\R^d)}\int_{\R^d}\frac{|v(x)|}{\;\;||v||_{{\cal E}_1}}\mu(dx)<\infty\right \},
\end{align*}
(where we follow the notations of \cite{Fukushima-et-al-2011}).

Finally, introduce
$$
S_{00}:= \{\mu \in S_0~:~\mu(\R^d)<\infty,\;\;||R_1\mu(.)||_{\infty}<\infty\}.
$$

Let us denote respectively by ${\bf A}^+_{c}$ and ${\bf A}^+_{c,1}$ the families of all Positive Continuous Additive Functionals (PCAF in short) (resp. the family of all PCAF in the strict sense) associated to $\M$ (for the distinction between ${\bf A}^+_{c}$ and ${\bf A}^+_{c,1}$, see \cite{Fukushima-et-al-2011} the introduction of Section 5.1).

The {\it Revuz correspondence} asserts that there is a one-to-one correspondence (up to equivalence of processes) between 
${\bf A}^+_{c}$ and $S$. This correspondence permits to construct for any $\mu\in S_{00}$ a unique PCAF in the strict sense $A\in {\bf A}^+_{c,1}$ such that
\begin{equation}
\label{eq:revuz-correspondence-restricted}
\forall x\in \R^d,\;\;\;\;\;E^x\int_0^\infty {\rm e}^{-t}dA_t = {R_{1}\mu}(x).
\end{equation}
 (see for e.g. Theorem 5.1.4 in \cite{Fukushima-et-al-2011}).
 
In order to get a bijective map, introduce a new subset $S_1$ of $S$ defined by $\mu\in S_1$ if 
there exists a sequence $(E_n)_{n\geq 0}$ of Borel finely open sets increasing to $\R^d$ satisfying that $\indi{E_n}.\mu \in S_{00}$ for each $n$.
Then, there is a one-to-one correspondence between $S_1$ and ${\bf A}^+_{c,1}$ (up to equivalence) which is given by relation \eqref{eq:revuz-correspondence-restricted} whenever $\mu \in S_{00}$. The set of measures $S_1$ is called the set of {\it smooth measures (in the strict sense)}.

Let us introduce ${\cal D}\croc{\cal E}_{b}$ (resp. ${\cal D}\croc{\cal E}_{b,\text{loc}}$) the space of essentially bounded functions belonging to ${\cal D}{\croc{\cal E}}$  (resp. locally to ${\cal D}{\croc{\cal E}}$. A function $u$ is in ${\cal D}\croc{\cal E}_{b,\text{loc}}$ if for any compact set $G$, there exists a bounded function $\omega\in{\cal D}\croc{\cal E}$ such that $u= \omega$,\; $\ell(dx)$-a.e. on $G$). 

For $u\in {\cal D}\croc{\cal E}_{b}$, we may associate a unique positive Radon measure $\mu_{\langle u\rangle}\in S$, satisfying
\begin{equation}
\label{eq:radon-measure-S}
\int_{\R^d}f(x)\mu_{\langle u\rangle}(dx) = 2{\cal E}(uf,u) - {\cal E}(u^2, f),\hspace{0,3 cm}\forall f\in {\cal D}\croc{\cal E}\cap C_c(\R^d).
\end{equation}
Observe that the positivity of the right hand side in \eqref{eq:radon-measure-S} comes from
\begin{align*}
0\leq E^x\croc{(u(X_t) - u(x))^2} &= E^x\croc{u^2(X_t) - 2u(x)u(X_t) + u^{2}(x)}\\
&= (T_tu^2 - u^2)(x) - 2u(x)(T_tu - u)(x).
\end{align*}
Taking the scalar product with $f\in {\cal D}\croc{\cal E}\cap C_c(\R^d)$ and dividing by $t$ gives
\begin{align*}
0&\leq  \pare{\frac{T_tu^2 - u^2}{t},\,f} - {2}\pare{\frac{T_t u - u}{t},\,uf}
\end{align*}
which tends to $2{\cal E}(uf,u) - {\cal E}(u^2, f)$ as $t$ tends to $0+$ whenever $u\in {\cal D}(A)$ ; the positivity in the case $u\in {\cal D}\croc{\cal E}_{b}$ may be obtained by a density argument.

If $u\in {\cal D}\croc{\cal E}_{b,\text{loc}}$, we may construct $\mu_{\langle u\rangle}\in S$ with the help of a sequence $(G_n)_{n\geq 0}$ of relatively compact open sets such that $\overline{G_n}\subset G_{n+1}$ and $\bigcup_{n\geq 0} G_n = \R^d$. Let $(u_n)_{n\geq 0}$ a sequence of functions in ${\cal D}\croc{\cal E}_{b}$ satisfying $u_n = u$ on $G_n$. There is no ambigu\"ity in defining $\mu_{\langle u\rangle} = \mu_{\langle u_n\rangle}$ on $G_n$ because the construction is consistent (since $\mu_{\langle u_n\rangle} = \mu_{\langle u_{n+1}\rangle}$ on $G_n$). For an account on the above assertions, please refer to \cite{Fukushima-et-al-2011} Section 3.2.

\subsubsection{The Fukushima decomposition}
Note that obviously $({\cal E}, {\cal D}\croc{\cal E})$ is strong local, so we may apply Theorem 5.5.5 in \cite{Fukushima-et-al-2011}.

Suppose that a function $u$ satisfies the following conditions~:
\begin{enumerate}
\item $u\in {\cal D}\croc{\cal E}_{b,\text{loc}}$ , $u$ is finely continuous on $\R^d$.
\item $\indi{G}.\mu_{\langle u\rangle}\in S_{00}$ for any relatively compact open set $G$.
\item $\exists \varrho = \varrho^{(1)} - \varrho^{(2)}$ with $\indi{G}.\varrho^{(1)}, \indi{G}.\varrho^{(2)} \in S_{00}$ for any relatively compact open set $G$ and
$$
{\cal E}(u,v) = (\varrho, v),\;\;\;\;\forall v\in C_c^{\infty}(\R^d).
$$
(Note that even though $u$ is not formally in ${\cal D}\croc{\cal E}$, the quantity ${\cal E}(u, v)$ is well-defined because $v$ has compact support and $u\in {\cal D}\croc{\cal E}_{b,\text{loc}}$).
\end{enumerate}
Let $A^{(1)}$,  $A^{(2)}$, and $B$ be PCAF's in the strict sense with Revuz measures $\varrho^{(1)}, \varrho^{(2)}$, and $\mu_{\langle u\rangle}$ respectively.
Then, Theorem 5.5.5 in \cite{Fukushima-et-al-2011} asserts that
\begin{equation}
u(X_t) - u(X_0) = M_t^{[u]} + N_t^{[u]},\hspace{0,3 cm}\P^x-a.s,\; \forall x\in \R^d.
\end{equation}
Here, 
\begin{equation}
N^{[u]} = -A^{(1)} + A^{(2)},\hspace{0,3 cm}\P^x-a.s, \;\forall x\in \R^d
\end{equation}
and $M^{[u]}$ is a local Additive Functional in the strict sense such that for any relatively compact set $G$,
$$
E^x M_{t\wedge \tau_G}^{[u]} = 0,\hspace{0,3 cm}\forall x\in G
$$
and
$$
E^x\croc{(M_{t\wedge \tau_G}^{[u]})^2}= \E^x B_{t\wedge \tau_G},\hspace{0,3 cm}\forall x\in G,
$$
where $\tau_G=\inf(s>0~:~X_s\notin G)$ stands for the first leaving time from $G$ (with the convention $\inf \emptyset = \infty$) and $B$ denotes the PCAF in the strict sense with Revuz measure $\mu_{\langle u\rangle}$.

\subsection{An insight to the Lyons-Zheng decomposition for diffusions associated to divergence form operators}
\label{Sec:Lyons-Zheng}
For the seek of completeness, in this paragraph we present briefly an insight on the Lyons-Zheng decomposition \cite{Lyons-Zheng} for the stochastic process in correspondence with a divergence form operator.

\subsubsection{Time reversal for diffusions associated to divergence form operators}
Assume for a moment that $a\in {\cal A}\pare{\lambda, \Lambda}$ is very smooth and belongs to $C^{\infty}\pare{\R^d \to {\cal M}_{d}\pare{\R}}$.

In this smooth case, the process $\{X_t = (X_t^1, \dots, X_t^d)~:~0\leq t\leq T\}$ in correspondence with $({\cal E}, {\cal D}\croc{\cal E})$ and constructed in the previous section becomes a diffusion process with values in $\R^d$ and well-known results from the general theory for solutions of stochastic differential equations ensure that $(X_t)$ is solution of 
\begin{align}
\label{eq:EDS-Hunt-2}
X_t^k &= x_k + \int_0^t \sum_{j=1}^d \sigma_{kj}(X_s)dW_s^{j} + \int_{0}^t \sum_{j=1}^d \partial_j a_{kj}(X_s)ds\hspace{0,3 cm}t\geq 0,\;{\mathbb P}^x-a.s.,\;\forall x=(x_1,\dots, x_d)\in \R^d,
\end{align}
where $W=(W^1,\dots,W^d)$ denotes a $d$-dimensional standard Brownian motion starting from zero and $\sigma:\R^d\to {\cal M}_d(\R)$ denotes the positive square-root of coefficient $2a$ {\it i.e.} the positive matrix real valued coefficient satisfying
$$
\sigma\sigma^*(x)=2a(x),\quad\forall x\in \R^d.
$$ 

We will denote by ${\cal L}$ the classical generator of $(X_t)$,
\begin{equation}
\label{eq:def-cL}
{\cal L} = \sum_{i,j=1}^d \partial_{i}\pare{a_{ij}\partial_j} = \sum_{i,j=1}^d a_{ij}\partial^2_{ij} + \sum_{i,j=1}^d \pare{\partial_{i}a_{ij}}\partial_{j}
\end{equation}
acting on $C^{2}(\R^d)$ real valued functions. 

Fix $x=(x_1,\dots, x_d)\in \R^d$. For any fixed $0\leq t<1$, denote $\overline{\cal L}^{x}_t$ the second order differential operator
\[
\overline{\cal L}^{x}_t = \sum_{i,j=1}a_{ij}\partial^{2}_{ij} - \sum_{i,j=1}^d \pare{\partial_{i}a_{ij}}\partial_{j}+ \croc{p(1-t,x,.)}^{-1}\sum_{i,j=1}^d \partial_j\pare{\pare{2a_{ij}} p(1-t,x,.)}\partial_{i}
\]
acting also on $C^{2}(\R^d)$ real valued functions and where $p$ denotes the fundamental solution \eqref{eq:fundamental-solution}. (Nota~:~the reader should be careful with the $2$ factor appearing in the last integral, that is due to the fact that we are considering $\sigma$ as the square root of $2a$ and not $a$. Also, since $a$ is assumed to be smooth, classical estimates for $p$ ensure that the term ${p(1-t,x,.)}^{-1}\partial_j\pare{\pare{2a_{ij}} p(1-t,x,.)}\partial_{i}$ is well defined). 

Consider $\{\overline{X}_t := X_{1-t}~:~t\in [0,1)\}$ the time reversed process of $(X_t)$. The fact that the time reversal of a Markov process is again a (weak) Markov process with respect to the reversed filtration traces back to the seminal result of K.L. Chung and J.B. Walsh \cite{Chung-Walsh}. Moreover, it is proved in \cite{Haussmann-Pardoux} that $\pare{\overline{X}_t}_{0\leq t<1}$ is a diffusion process with generator $\overline{\cal L}$ up to time $1$ excluded. The process $\{\overline{X}_t = (X_{1-t}^1, \dots, X_{1-t}^d)~:~0\leq t< 1\}$ is solution of 
\begin{align*}
\overline{X}_t^k = X_1 &+ \int_0^t \sum_{j=1}^d \sigma_{kj}(\overline{X}_s)d\beta_s^{j} - \int_{0}^t \sum_{j=1}^d \partial_j a_{kj}(\overline{X}_s)ds\nonumber\\
 &+ \int_0^t\croc{p(1-t,x,\overline{X}_s)}^{-1}\sum_{i,j=1}^d \partial_j\pare{\pare{2a_{ij}} p(1-s,x,\overline{X}_s)}ds,\;{\mathbb P}^x-a.s.,\;\forall x=(x_1,\dots, x_d)\in \R^d,
\end{align*}
where $\beta=(\beta^1,\dots,\beta^d)$ denotes a $d$-dimensional standard Brownian motion starting from zero and adapted to the filtration $\left \{{\cal F}^{\overline{X}}_t := \sigma\pare{X_{1-u}~:~u\in [0,t]}~\mid~t\in [0,1)\right \}$.

\subsubsection{The Lyons-Zheng decomposition}

We now make the following observation. Fix $0<\varepsilon<1$, then for any $\varphi\in C_{c}^{\infty}(\R^d)$ and arbitrary $t\in [\varepsilon, 1]$
\begin{align*}
\varphi(X_t) - \varphi(x) &= \frac{1}{2}\pare{\varphi(X_t) - \varphi(x)} + \frac{1}{2}\croc{\pare{\varphi(X_{1-(1-t)}) - \varphi(X_{1-(1-\varepsilon)})} + \pare{\varphi(X_{\varepsilon}) - \varphi(x)}}\\
&=\frac{1}{2}\pare{\varphi(X_t) - \varphi(x)} - \frac{1}{2}\pare{\varphi(\overline{X}_{1-\varepsilon})  - \varphi(\overline{X}_{1-t})} + \frac{1}{2}\pare{\varphi(X_{\varepsilon}) - \varphi(x)}
\end{align*}
and applying It\^o's formula, we find that for all $t\in [\varepsilon,1]$ 
\begin{align}
\label{eq:renversement}
&\varphi(X_t) - \varphi(x)\nonumber\\
&= \frac{1}{2}\pare{\int_0^t \pare{\nabla \varphi(X_u), \sigma(X_u)dW_u} + \int_0^t{\cal L}\varphi(X_u)du} - \frac{1}{2}\pare{\int_{1-t}^{1-\varepsilon} \pare{\nabla \varphi(\overline{X}_u), \sigma(\overline{X}_u)d\beta_u} - \int_{1-t}^{1-\varepsilon}\overline{\cal L}^{x}_u\varphi(\overline{X}_u)du}\nonumber\\
&\hspace{0,3 cm} + \frac{1}{2}\pare{\varphi(X_{\varepsilon}) - \varphi(x)}\nonumber\\
&=\frac{1}{2}M_t^{\varphi} + \frac{1}{2}\tilde{N}_t^{\varphi,\varepsilon} + \frac{1}{2}\int_{\varepsilon}^{t}({\cal L} - \overline{\cal L}_{1-u}^{x})\varphi(X_u)du\nonumber + \frac{1}{2}\pare{\varphi(X_{\varepsilon}) - \varphi(x) + \int_0^{\varepsilon}{\cal L}\varphi(X_u)du}\nonumber\\
\end{align}
where we have set $\ds M_t^{\varphi} := \int_0^t \pare{\nabla \varphi(X_u), \sigma(X_u)dW_u}$ and $\ds \tilde{N}_t^{\varphi,\varepsilon} := -\int_{1-t}^{1-\varepsilon} \pare{\nabla \varphi(\overline{X}_u), \sigma(\overline{X}_u)d\beta_u}$. 

We may write 
\begin{align*}
\tilde{N}_t^{\varphi,\varepsilon} &= -\int_{1-t}^{1-\varepsilon} \pare{\nabla \varphi(\overline{X}_u), \sigma(\overline{X}_u)d\beta_u}\\
&=\int_0^{1-t}\pare{\nabla \varphi(\overline{X}_u), \sigma(\overline{X}_u)d\beta_u} -\int_{0}^{1-\varepsilon} \pare{\nabla \varphi(\overline{X}_u), \sigma(\overline{X}_u)d\beta_u}\\
&=N^{\varphi}_{1-t} - N_{1-\varepsilon}^{\varphi},
\end{align*} 
where $\pare{N^{\varphi}_\theta}$ stands for the ${\cal F}^{\overline{X}}$ martingale $\ds\pare{\int_0^{\theta}\pare{\nabla \varphi(\overline{X}_u), \sigma(\overline{X}_u)d\beta_u}}_{\theta\in [0,1)}$.

Now for any $z\in \R^d$
\begin{align*}
\frac{1}{2}\pare{{\cal L} - \overline{\cal L}_{1-u}^{x}}\varphi(z) &= \sum_{i,j=1}^d \pare{\partial_{i}a_{ij}}\partial_{j}\varphi(z) - \croc{p(u,x,z)}^{-1}\sum_{i,j=1}^d \partial_j\pare{a_{ij} p(u,x,z)}\partial_{i}\varphi(z)\\
&={\croc{{\sum_{i,j=1}^d \pare{\partial_{i}a_{ij}}\partial_{j}\varphi(z)} - {\sum_{i,j=1}^d \pare{\partial_j a_{ij}}\partial_{i}\varphi(z)}} }- \croc{p(u,x,z)}^{-1}\sum_{i,j=1}^d a_{ij}\partial_jp(u,x,z)\partial_{i}\varphi(z)\\
&=- \croc{p(u,x,z)}^{-1}\sum_{i,j=1}^d a_{ij}\partial_jp(u,x,z)\partial_{i}\varphi(z)
\end{align*}
where terms cancel due to the symmetry of the coefficient matrix $a$.

Of course $\ds\lim_{\varepsilon \searrow 0+}\pare{\varphi(X_\varepsilon) - \varphi(x) + \int_{0}^{\varepsilon}{\cal L}\varphi(X_s)ds} = 0$.
Applying the martingale convergence theorem ensures that the martingale term ${N}_{1-\varepsilon}^{\varphi}$ tends $\P^x -a.s.$ to $\ds N_1^{\varphi} = \int_0^{1}\pare{\nabla \varphi(\overline{X}_u), \sigma(\overline{X}_u)d\beta_u}$ as $\varepsilon$ tends to zero. 

In turn, this ensures from \eqref{eq:renversement} that the limit
\begin{equation}
\label{eq:limit-dlog-density}
\lim_{\varepsilon \searrow 0+}\int_\varepsilon^t \croc{p(u,x,X_u)}^{-1}\sum_{i,j=1}^d a_{ij}\partial_jp(u,x,X_u)\partial_{i}\varphi(X_u)du 
\end{equation}
exists $\P^x-a.s.$

Coming back to \eqref{eq:renversement} and taking limits as $\varepsilon \searrow 0+$ yields the Lyons-Zheng decomposition of $\pare{X_t}$ when the coefficient matrix $a$ is smooth~:~for all $\varphi\in C_c^{\infty}$, $\forall t\in [0,1), \P^{x}-a.s.$
\begin{align}
\label{eq:renversement-2}
\varphi(X_t) - \varphi(X_s) 
&=\frac{1}{2}M_t^{\varphi} + \frac{1}{2}\tilde{N}^{\varphi}_t - \int_{0}^{t}\croc{p(u,x,X_u)}^{-1}\sum_{i,j=1}^d a_{ij}\partial_jp(u,x,X_u)\partial_{i}\varphi(X_u)du
\end{align}
with $\tilde{N}_t^{\varphi} = N_{1-t}^{\varphi} - N_{1}^{\varphi}$ an increment of a time-reversed martingale. The quadratic variations of the martingales involved in the decomposition are given by $$\ds \langle M^{\varphi}\rangle_t = \int_0^t \sum_{i,j=1}^d 2a_{ij}(X_s)\partial_{i}\varphi(X_s)\partial_j\varphi(X_s)ds,\;\;\text{and}\;\;\ds \langle N^{\varphi}\rangle_t = \int_0^t \sum_{i,j=1}^d 2a_{ij}(\overline{X}_s)\partial_{i}\varphi(\overline{X}_s)\partial_j\varphi(\overline{X}_s)ds.$$
\vspace{0,4 cm}

We make the following observations~:~
\begin{itemize}{}
\item If we have proved \eqref{eq:limit-dlog-density}, it is important to notice however that there is no limit to the deterministic quantity $\ds \int_\varepsilon^t \croc{p(u,x,z)}^{-1}\sum_{i,j=1}^d a_{ij}\partial_jp(u,x,z)\partial_{i}\varphi(z)du$ as $\varepsilon$ tends to $0+$. This can be easily seen by performing the computation from the explicit Laplacian case where $p$ is just the Gaussian transition density of some Browian motion started from $x$.
\item For any $\varepsilon>0$ the random variable $\ds \tilde{N}_t^{\varphi,\varepsilon} = -\int_{1-t}^{1-\varepsilon} \pare{\nabla \varphi(\overline{X}_u), \sigma(\overline{X}_u)d\beta_u}$ is measurable w.r.t the sigma field $$\sigma\pare{\overline{X}_u~:~u\in [1-t,1-\varepsilon]} = \sigma\pare{X_u~:~u\in [\varepsilon, t]}$$
so that $\pare{\tilde{N}^{\varphi}_t}$ and all terms in \eqref{eq:renversement-2} are adapted to $\pare{{\cal F}_t^X}_{t\in [0,1]}$ the natural filtration of $X$.
\item Only $\varphi$ and its first order partial derivatives appear in equality \eqref{eq:renversement-2}. Using a density argument and a little work we may prove that the equality holds for $\varphi\in H^{1}(\R^d)$.
\item None of the quantities in the right hand side of \eqref{eq:renversement-2} involve the derivatives of the coefficient matrix $a$~:~ the dependence on the derivatives of the coefficient $a$ is totally encompassed in the logarithmic derivative of the fundamental solution $p(t,x,y)$.
\end{itemize} 

The idea is now to pick $a$ measurable in ${\cal A}\pare{\lambda, \Lambda}$ and to take a sequence of smooth $\{a_n\}_1^\infty\subset{\cal A}(\lambda, \Lambda)$ such that $a_n\longrightarrow a$ almost everywhere and to prove that there is convergence in law for the decomposition \eqref{eq:renversement-2} for any $\varphi$ belonging to a the widdest possible class of functions.
This programm has been successively performed in \cite{Rozkosz-1996} and \cite{Rozkosz-1996-2} in a more general setting of an inhomogeneous divergence operator.
\vspace{0,4 cm}

Though theoretically powerful, the Lyons-Zheng decomposition is unlikely to be directly exploitable from a numerical perspective, as we do not have access to the logarithm derivative of the transition density, even in mild cases. 
\section{Stochastic dynamics associated to transmission operators in divergence form}
\label{sec-dirichlet2}

\subsection{Skorokhod representation of the Hunt process associated to a transmission operator in divergence form}
\label{ssec-skoro}

Consider $\R^d=\bar{D}_+\cup D_-$ with $D_+$ and $D_-$ two open connected subdomains separated by a transmission boundary~$\Gamma$ that is to say$$\Gamma=\bar{D}_+\cap \bar{D}_-.$$ 

We denote 
$$D=D_+\cup D_-=\R^d\setminus\Gamma\subset\R^d.$$

For a point $x\in\Gamma$ we denote by $\nu(x)\in\R^d$ the unit normal to $\Gamma$ at point $x$, pointing to $D_+$. In the following, "$f\in C^p(\bar{D}_+;\R)\cap C^p(\bar{D}_-;\R)$" means that the restriction $f_+$ of the real valued function $f$ to $D_+$ (and the restriction $f_-$ of $f$ to $D_-$) coincides on $D_+$ (resp. $D_-$) with a function $\tilde{f}_+$ of class $C^p(\R^d)$ (resp.~$\tilde{f}_-$). Furthermore $C^p_b(E)=C^p_b(E;\R)$ will denote the set of real valued functions on $E$ of class $C^p$, bounded with bounded derivatives up to order $p$.

Assume the $a_{ij}$'s satisfy $(a_\pm)_{ij}\in C(\bar{D}_\pm ; \R)$. We may define then the co-normal vector fields ${\gamma_+}(x) := \varepsilon_+(x)\nu(x)$ and $\gamma_-(x) := \varepsilon_{-}(x)\nu(x)$, for $x\in\Gamma$.

We shall consider restricted operators and bilinear forms in the following sense. We define 
$A_+:H^1(D_+)\to H^{-1}(D_+)$ by
$$
\forall v\in H^1(D_+),\quad A_+v=\sum_{i,j=1}^dD_i\big((\varepsilon_+)_{ij}D_jv\big).$$
We define $A_-:H^1(D_-)\to H^{-1}(D_-)$ in the same manner (note that we do not specify here any domain~$\cD(A_\pm)$). Further, we define
\begin{equation*}
\cE_\pm(u,v)= \sum_{i,j=1}^d\int_{D_\pm} (a_\pm)_{ij}D_ju\,D_iv,\quad\forall u,v\in H^1(D_\pm).
\end{equation*}
We have, for $u_\pm\in H^1(D_\pm)$ with $A_\pm u_\pm \in L^2(D_\pm)$,
\begin{equation}
\label{eq:lienAEp}
\cE_\pm(u_\pm,v)=\int_{D_\pm}(-A_\pm u_\pm)v,\quad\forall v\in H^1_0(D_\pm).
\end{equation}

Imagine now that in \eqref{eq:lienAEp} we wish to take the test function in $H^1(D_\pm)$ instead of $H^1_0(D_\pm)$. There will still be a link between $A_\pm$ and $\cE_\pm$, but through Green type identities, involving 
conormal derivatives and boundary integrals.

We introduce a specific notation for the one-sided conormal derivatives on~$\Gamma$ of $u\in L^2(\R^d)$ with 
$u_\pm\in H^2(D_\pm)$. Provided the $(a_\pm)_{ij}$ are in $C^1_b(\bar{D}_\pm;\R)$ and $\Gamma$ is bounded and Lipschitz we set
\begin{equation}
\label{eq:derconorm}
\cB^{\pm}_\nu u=\nu^*{\rm Tr}^{\pm}(a_\pm\nabla u_\pm)=\sum_{i=1}^d\sum_{j=1}^d\nu_i{\rm Tr}^{\pm}\big((a_\pm)_{ij}D_ju_\pm\big)\quad\text{ on  }\;\;\Gamma
\end{equation}
where ${\rm Tr}^{\pm}: H^1(D_\pm)\to H^{1/2}(\Gamma)$ stand for the usual trace operators on $\Gamma$. 

For $g\in H^{-\frac 1 2}(\Gamma)$ and $f\in H^{\frac 1 2}(\Gamma)$ we denote by $\big(g,f\big)_\Gamma$ the action of $g$ on $f$. If both $f,g$ are in $H^{\frac 1 2}(\Gamma)$ the quantity $\big(g,f\big)_\Gamma$ coincides with the surface integral 
$\int_\Gamma gf \,d\varsigma$.

Let us recall the version of the Green identity that is used in the sequel.
\begin{proposition} [First Green identity, first version; \cite{McLean-2000}, Lemma 4.1]
\label{prop:green1}
Assume $\Gamma$ is bounded and $C^2$. Let $u\in L^2(\R^d)$ with $u_+\in H^2(D_+)$ and $u_-\in H^2(D_-)$. Assume that the coefficients $(a_\pm)_{ij}$ are in $C^1_b(\overline{D}_\pm; \R)$.
Then
$$
\cE_\pm(u_\pm,v)=\int_{D_\pm}(-A_\pm u_\pm)v\mp\Big( \cB^\pm_\nu u,{\rm Tr}^\pm(v)\Big)_\Gamma\;\;,\quad\forall v\in H^1(D_\pm).
$$
\end{proposition}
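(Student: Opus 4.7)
The plan is to reduce, on each side separately, to the classical divergence theorem via a density argument, combined with a cutoff to handle the possibly unbounded piece. I would treat the $+$ case; the $-$ case is identical up to reversing the orientation of $\nu$ relative to the domain, which accounts for the $\mp$ in the statement.

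First, for smooth, compactly supported data ($u_+ \in C^2(\bar D_+)$, $v \in C^1(\bar D_+)$, both compactly supported in $\bar D_+$), I set $F = a_+\nabla u_+$ so that $\mathrm{div}(F) = A_+ u_+$ and $\mathrm{div}(vF) = F\cdot\nabla v + v\,\mathrm{div}(F)$. Integrating over $D_+$ and applying the classical divergence theorem (valid since $\Gamma$ is $C^2$ and the data are compactly supported) yields the identity immediately; since the outward normal to $D_+$ along $\Gamma$ is $-\nu$, the surface term reads $-(\cB^+_\nu u, \mathrm{Tr}^+(v))_\Gamma$. I would then extend to all $u_+ \in H^2(D_+)$ and $v \in H^1(D_+)$ still of compact support by approximation: $C^\infty(\bar D_+)$-functions are dense in both spaces (via local $C^2$ charts near $\Gamma$ and mollification), and all three terms are continuous in the limit---$\cE_+$ on $H^1 \times H^1$; the map $u\mapsto A_+u$ from $H^2(D_+)$ to $L^2(D_+)$ since $(a_+)_{ij}\in C^1_b$; and the traces $\mathrm{Tr}^+$ and $\cB^+_\nu$, both continuous from $H^1(D_+)$ (respectively $H^2(D_+)$) into $L^2(\Gamma)$ because $\Gamma$ is bounded and $C^2$, making the surface pairing continuous.

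Finally, since $\Gamma$ is bounded, at least one of $D_\pm$ is unbounded; I would remove the compact-support assumption on $v$ via a cutoff $\chi_R \in C^\infty_c(\R^d)$ with $\chi_R \equiv 1$ on $B_R \supset \Gamma$, supported in $B_{2R}$, and $\|\nabla \chi_R\|_\infty \le C/R$. Applying the identity established above to $u_+$ and the compactly supported test function $\chi_R v$, the surface term is exactly $-(\cB^+_\nu u,\mathrm{Tr}^+(v))_\Gamma$ (since $\chi_R = 1$ on $\Gamma$). As $R\to\infty$, dominated convergence disposes of the volume integrals---all integrands are in $L^1$ by Cauchy--Schwarz since $a_+\nabla u_+, \nabla v, A_+u_+, v \in L^2(D_+)$---and the parasitic cross term $\int_{D_+} v\, F\cdot\nabla\chi_R$ is bounded by $\|\nabla\chi_R\|_\infty \|v\|_{L^2}\|F\|_{L^2} = O(1/R)$, hence vanishes. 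The main technical obstacle is precisely this cutoff step: one must verify that no spurious boundary contribution emerges at infinity; the rest reduces to careful sign bookkeeping and standard Sobolev trace theory on the bounded $C^2$ interface $\Gamma$.
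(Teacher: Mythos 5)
Your argument is correct and is essentially the standard proof of this identity, which the paper does not reprove but simply imports from the cited reference (McLean, Lemma~4.1): divergence theorem for smooth data with the sign of the surface term dictated by the fact that $\nu$ is outward for $D_-$ and inward for $D_+$, then density of $C^\infty(\overline{D}_\pm)$ in $H^2(D_\pm)$ and $H^1(D_\pm)$ using continuity of $\cE_\pm$, of $A_\pm:H^2(D_\pm)\to L^2(D_\pm)$ (here $(a_\pm)_{ij}\in C^1_b$ is used), and of the trace maps into $L^2(\Gamma)$, and finally a cutoff at infinity on the unbounded side whose cross term $\int v\,(a_\pm\nabla u_\pm)\cdot\nabla\chi_R$ is $O(1/R)$. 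The only cosmetic point is that you need not (and should not) assume $u_\pm$ compactly supported in the first two steps --- the divergence theorem already applies on $D_\pm\cap B_{2R}$ once the test function is compactly supported, so only the cutoff in $v$ is genuinely required; as written, your final step quotes an identity established only for compactly supported $u_\pm$, which would leave general $u_\pm\in H^2(D_\pm)$ uncovered.
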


We have the following result.
\begin{theorem}
\label{Theo:stochastic-representation}
Assume $\Gamma$ is bounded and $C^2$. Assume that Assumption ${\rm ({\bf E-B})}$ is fulfilled and that for all $1\leq i,j\leq d$ $a_{ij}\in C_b^1(\overline{D}_+;\R)\cap C_b^1(\overline{D}_+;\R)$ with $a_{ij}$ possessing a possible discontinuity on $\Gamma$. 
Then, the Hunt process $\M$ associated to $({\cal E}, {\cal D}\croc{\cal E})$ is a diffusion which possesses the following Skorokhod decomposition~:~for any $k\in \{1,\dots, d\}$,
\begin{align}
\label{eq:EDS-Hunt}
X_t^k &= x_k + \int_0^t \sum_{j=1}^d \sigma_{kj}(X_s)dW_s^{j} + \int_{0}^t \sum_{j=1}^d \partial_j a_{kj}(X_s)\indi{X_s\in D}ds\nonumber\\
&\hspace{0,3 cm} + \frac{1}{2}\int_0^t \gamma_{+,k}(X_s) dK_s - \frac{1}{2}\int_0^t \gamma_{-,k}(X_s)dK_s,\hspace{0,3 cm}t\geq 0,\;{ P}^x-a.s.,\;\forall x=(x_1,\dots, x_d)\in \R^d.
\end{align}
In the above equality $\sigma:\R^d\to\R^{d\times d}$ denotes the positive square-root of coefficient $2a$ {\it i.e.} the positive matrix real valued coefficient satisfying
$$
\sigma\sigma^*(x)=2a(x),\quad\forall x\in D.$$
(Note that this coefficient exists because  $a(x)$ is non-negative definite for all $x\in D$). The process $W=(W^1,\dots,W^d)$ is a $d$-dimensional standard Brownian motion starting from zero and $(K_t)_{t\geq 0}$ denotes the unique PCAF associated to the surface measure $\varsigma(d\xi)\in S$ on $\Gamma$ through the Revuz correspondence. The process $(K_t)$ increases only at times where $X$ lies on $\Gamma$,
$$
\int_0^t \indi{X_s\in \Gamma}dK_s = K_t,\hspace{0,3 cm}t\geq 0.
$$
\end{theorem}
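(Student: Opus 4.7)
The plan is to apply, coordinate-wise, the Fukushima decomposition recalled in Section \ref{Sec:Fuku-decomp} to each of the functions $u_k(x)=x_k$. These are not elements of $\cD\croc{\cE}$ globally but, being smooth, they belong to $\cD\croc{\cE}_{b,\text{loc}}$ and are finely continuous, so that after an exhaustion of $\R^d$ by relatively compact open sets $(G_n)$ we may work under the three hypotheses of Theorem 5.5.5 in \cite{Fukushima-et-al-2011}. The proof then reduces to computing the two Revuz measures attached to $u_k$, namely $\mu_{\langle u_k\rangle}$ which governs the continuous martingale part $M^{[u_k]}$, and the signed measure $\varrho_k$ defined by $\cE(u_k,\cdot)=(\varrho_k,\cdot)$ which governs the bounded-variation part $N^{[u_k]}$, and to checking that, restricted to any relatively compact open set, both of them belong to $S_{00}$.

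For the martingale part, plugging $u=u_k$ into \eqref{eq:radon-measure-S} and using the symmetry of $a$ to cancel cross terms gives $\mu_{\langle u_k\rangle}(dx)=2 a_{kk}(x)\,dx$, and by polarization $\mu_{\langle u_k,u_l\rangle}(dx)=2 a_{kl}(x)\,dx$. Boundedness of $a$ combined with Aronson's upper bound \eqref{eq:Aronson-estimates} yields $\indi{G}\mu_{\langle u_k\rangle}\in S_{00}$ for every relatively compact open $G$. By Revuz duality the associated PCAF is $\int_0^t 2 a_{kk}(X_s)\,ds$, hence $\langle M^{[u_k]},M^{[u_l]}\rangle_t=\int_0^t 2 a_{kl}(X_s)\,ds$. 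A standard $d$-dimensional martingale representation argument (up to enlarging the probability space if necessary) then produces a Brownian motion $W$ such that $M_t^{[u_k]}=\sum_j\int_0^t\sigma_{kj}(X_s)\,dW_s^j$, with $\sigma$ the nonnegative symmetric square-root of $2a$.

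For the bounded-variation part, we compute $\cE(u_k,v)$ for $v\in C_c^\infty(\R^d)$ by splitting the integral into contributions over $D_+$ and $D_-$ and applying the first Green identity (Proposition \ref{prop:green1}) on each side. With $D_j u_k=\delta_{jk}$ we get $A_\pm u_k = \sum_i\partial_i (a_\pm)_{ik}$ in the bulk and $\cB^\pm_\nu u_k = (\gamma_\pm)_k$ on $\Gamma$, whence
$$
\cE(u_k,v)=-\int_D\Big(\sum_{j=1}^d\partial_j a_{kj}\Big)v\,dx \;-\; \int_\Gamma\bigl((\gamma_+)_k-(\gamma_-)_k\bigr)\,\mathrm{Tr}(v)\,d\varsigma.
$$
The bulk density is bounded on compacts thanks to $a_{ij}\in C^1_b(\overline{D_\pm})$, and the surface piece is a signed measure with bounded density with respect to the finite surface measure $\varsigma$ carried by the bounded $C^2$ hypersurface $\Gamma$. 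Aronson's estimates once again give boundedness of $R_1$ applied to the positive and negative parts on any relatively compact $G$, so the strict-sense decomposition applies. By Revuz duality the bulk PCAF is $\int_0^t\partial_j a_{kj}(X_s)\indi{X_s\in D}\,ds$, and the PCAF associated with $\varsigma$ is by definition the $K$ appearing in the statement; since its Revuz measure is supported on $\Gamma$, the support theorem for PCAFs gives $\int_0^t\indi{X_s\in\Gamma}\,dK_s=K_t$.

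Assembling these identifications, and taking careful account of the sign convention $N^{[u_k]}=-A^{(1)}+A^{(2)}$ together with the normalization between $\cB^\pm_\nu u_k$ and the coefficient of $dK_s$ that is dictated by $\sigma\sigma^*=2a$ (which is precisely what produces the factor $1/2$ in front of $\gamma_{\pm,k}$), yields \eqref{eq:EDS-Hunt}. In my view the main technical obstacle is not any isolated computation but the careful verification that the measures involved lie locally in $S_{00}$, so that the Fukushima decomposition holds in the strict sense under $\P^x$ for every $x\in\R^d$ and not merely on the complement of an exceptional set; this hinges both on the Aronson bounds obtained in Section \ref{ssec-stroock} and on the smoothness and compactness of $\Gamma$. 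A secondary technical point is the $d$-dimensional martingale representation step, which may require enlarging the filtered probability space to accommodate the driving Brownian motion $W$.
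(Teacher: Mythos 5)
Your proposal follows essentially the same route as the paper's proof: the Fukushima decomposition (Theorem 5.5.5 of \cite{Fukushima-et-al-2011}) applied to localized coordinate functions, identification of the energy measure $2a_{kk}\,dx$ followed by martingale representation for the martingale part, and the Green identity of Proposition \ref{prop:green1} to split $\cE(p_k,\cdot)$ into a bulk drift density on $D$ and a surface measure on $\Gamma$, with $S_{00}$ membership checked via Aronson's estimates. The one substantive step your sketch elides, and which the paper carries out explicitly, is the verification that the surface measure $\varsigma$ has finite energy (i.e.\ belongs to $S_0$): this rests on the trace inequality $\int_\Gamma |h|\,d\varsigma\leq C_0\int_{D_+}(|\nabla h|+|h|)$ and does not follow from Aronson's bounds, which only give the remaining two conditions $\mu(\R^d)<\infty$ and $\|R_1\mu\|_\infty<\infty$.
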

\begin{proof}
We apply the results of Theorem 5.5.5 in \cite{Fukushima-et-al-2011} in this context for the coordinate functions $$p_k(x_1,\dots, x_d):=x_k\;\;\;(k\in \{1,\dots, d\}).$$ and follow the ideas of \cite{Trutnau-2005} Theorem 5.2. Of course $p_k\in {\cal D}\croc{\cal E}_{b,\text{loc}}$ and $p_k$ is finely continuous on $\R^d$. Let $G$ a relatively compact open set containing $\Gamma$ and a function $f_k\in {\cal D}\croc{\cal E}_{b}$ such that $p_k=f_k$ on $G$.   
Let $\langle M^{[f_k]}\rangle$ the square bracket of $M^{[f_k]}$. Then, an easy computation from \eqref{eq:radon-measure-S} shows that the energy measure of $M^{[f_k]}$ (the Revuz measure of $\langle M^{[f_k]}\rangle$) is 
\begin{equation}
\label{eq:mufk-etc}
\mu_{\langle f_k\rangle}(dy) = \mu_{\langle M^{[f_k]}\rangle}(dy) = \langle 2a(y)\nabla f_k(y), \nabla f_k(y)\rangle\ell(dy)
\end{equation}
and we know that $\mu_{\langle f_k\rangle} = \mu_{\langle p_k\rangle}$ on $G$. It is easy to show that $\indi{G}.\mu_{\langle p_k\rangle}$ is a finite Radon measure belonging to $S_{00}$ and that $\mu_{\langle p_k\rangle}$ is a smooth measure. Then, an easy computation from \eqref{eq:revuz-correspondence-restricted} shows that 
$$
\langle M^{[f_k]}\rangle_t = \int_0^t \langle 2a(X_s)\nabla f_k(X_s), \nabla f_k(X_s)\rangle ds,\;\;k\in\{1,\dots, d\}
$$ 
and by the well-known results on stochastic representation of martingales, there exists a $d$ dimensional Brownian~motion $W=(W^1,\dots,W^d)$ such that 
$$
M^{[f_k]}_t = \int_0^t \big[\sigma(X_s)\nabla f_k(X_s)\big]^*dW_s,\;\;P^x-{\rm a.s.}\;\;\forall x\in \R^d,\;\;k\in\{1,\dots, d\}
$$
(see for e.g. \cite{Revuz-Yor-1999} Chapter V\; Theorem 3.9 and the remark following its proof).

Moreover, for any $v\in C_c^\infty(\R^d)$, using the Green Identities of Proposition \ref{prop:green1} and taking into account that $v$ is of compact support, we have~:~
\begin{align*}
{\cal E}(f_k, v) &= {\cal E}_{+}(f_{k,+},v) + {\cal E}_{-}(f_{k,-},v)\\
&=\int_{D_+}(-A_+f_{k,+})v-\Big( \cB^+_\nu f_{k,+},{\rm Tr}^+(v) \Big)_\Gamma + \int_{D_-}(-A_-f_{k,-})v + \Big( \cB^-_\nu f_{k,-},{\rm Tr}^{-}(v) \Big)_\Gamma\\
&=-\int_D \sum_{i,j=1}^d D_i \pare{a_{ij}(y)D_j f_k(y)}v(y) \indi{y\in D}\ell(dy)\\
&\hspace{3,5 cm} - \int_{\Gamma}\nu^\ast \croc{{\rm Tr}^+\pare{\varepsilon_+\nabla f_{k,+}} - {\rm Tr}^-\pare{\varepsilon_{-}\nabla f_{k,-}}}\gamma(v)d\varsigma\\
&=-\int_D \sum_{j=1}^d \partial_j a_{kj}(y)v(y) \indi{y\in D}\ell(dy) - \int_{\Gamma}\croc{{\rm Tr}^+((\varepsilon_+\nu)_k) - {\rm Tr}^-((\varepsilon_{-}\nu)_k)}vd\varsigma\\
&=-\int_D \sum_{j=1}^d \partial_j a_{kj}(y)v(y) \indi{y\in D}\ell(dy) - \int_{\Gamma}\croc{(\tilde{a}_+\nu)_k - (\tilde{a}_-\nu)_k}vd\varsigma\\
&=(\varrho_k^+, v) - (\varrho_k^-, v)
\end{align*}
with
$$
\varrho_k^\pm(dy):= -\sum_{j=1}^d \croc{\partial_j a_{kj}(y)}^\pm\indi{y\in D}\ell(dy) + \croc{(\gamma_-)_k - (\gamma_+)_k}^\pm(y)\indi{y\in \Gamma}\varsigma(dy).
$$ 
(here, the notation $[a]^+$ (resp. $[a]^{-}$) stands for the positive (resp. negative) part of some real number $a$).

Let us now proceed to show that the measures $\indi{G}.\varrho_k^\pm$ belong to $S_{00}$.

Note that $||\partial_j a_{kj}\indi{D\cap G}||_{\infty}<\infty$ and from the definition of $S_{00}$ and the Revuz correpondence \eqref{eq:revuz-correspondence-restricted}, it is not difficult to prove that the measures $-\croc{\partial_j a_{kj}}_\pm(y)\indi{y\in D}\ell(dy)$ are smooth with their corresponding additive functional writing as $\pare{-\int_0^t\croc{\partial_j a_{kj}}_\pm(X_s)\indi{X_s\in D}ds}_{t\geq 0}$.

We now turn to the surface measures $\zeta_k^\pm(dy):=\croc{(\gamma_-)_k - (\gamma_+)_k}^\pm(y)\indi{y\in \Gamma}\varsigma(dy)$. It is well-known (see e.g. \cite{Evans-Gariepy-2015} p.134 3. $(\star \star \star)$, $(\star \star \star \star)$) that there exists a universal constant $C_{0}>0$, depending only on the Lipschitz domain $D_+$, such that for all $h\in C^1(\overline{D}_+)$,
\begin{align*}
\int_{\Gamma}|h(y)|\varsigma(dy) \leq C_{0}\int_{D_+} (|\nabla h(x)| + |h(x)|)\ell(dx).
\end{align*}
Thus, for all $h\in {\cal D}\croc{{\cal E}}\cap C_c(\R^d)$, we have
\begin{align*}
\int_{\Gamma}|h(y)|\varsigma(dy) &\leq C_{0}\int_{D_+} (|\nabla h(x)| + |h(x)|)\ell(dx)\\
&\leq C_{0}(2\ell(D_+))^{1/2}\pare{\int_{\R^d}(|\nabla h(x)|^2 + |h(x)|^2)\ell(dx)}^{1/2}\\
&\leq C_{0}\sqrt{\frac{(2\ell(D_+))}{\lambda}}\pare{{\cal E}(h,h) + (h,h)}^{1/2}
\end{align*}
so that the surface measure $\varsigma(dy)$ belongs to $S_0$. Since
\begin{align*}
\forall y\in \Gamma,\;\;\;\croc{|\,(\gamma_-)_k - (\gamma_+)_k\,|(y)}^{\pm}\leq 2 |\tilde{a}_{\pm}(y)\nu(y)| \leq 2\Lambda,
\end{align*}
the surface measures $\zeta_k^\pm(dy):=\croc{(\gamma_-)_k - (\gamma_+)_k}^{\pm}(y)\indi{y\in \Gamma}\varsigma(dy)$ belong also to $S_0$.

Note that from Aronson's estimates \eqref{eq:Aronson-estimates} we retrieve the following estimations
\begin{align*}
r_1(x,y)\leq C|x-y|^{-(d-2)}\;\;\text{ if }d>2\,;\,\,r_1(x,y)\leq C\pare{\ln(1/|x-y|)\vee 1}\text{ if }d=2.
\end{align*}
Then, using the same arguments as in \cite{Fukushima-et-al-2011} (Example 5.2.2 p.255), we can assert that the measures $\zeta_k^\pm(dy)$ belong to $S_{00}$. Moreover, let $(K_t)_{t\geq 0}$ denote the PCAF associated to $\varsigma(dy)$ ; in regard of the results stated in the original article of D. Revuz (cf. \cite{Revuz-1970} p.507) we may assert that $\pare{\int_0^t\croc{(\gamma_-)_k - (\gamma_+)_k}^{\pm}(X_s)\indi{X_s\in \Gamma}dK_s}_{t\geq 0}$ is the PCAF associated to $\zeta_k^\pm(dy)$ via the Revuz correspondence.

By application of Theorem 5.5.5 in \cite{Fukushima-et-al-2011} and since all the necessary hypothesis are fulfilled, we get the decomposition \eqref{eq:EDS-Hunt} on the set $\{t\geq 0~:~t\leq \tau_{G_q}\}$ where $G_q:=\{x\in \R^d~:~|x|< q\}$. The identification of the process for all times follows by letting $q$ tend to infinity.
\end{proof}

Let $u_0\in \cD(A)$. From the Hille-Yosida theorem (\cite{brezis} Theorems VII.4 and VII.5) 
we can prove that there exists a unique function 
$$
u\in C^{1}\big([0,T];\,  L^2(\R^d) \big)\cap C\big([0,T];\, \cD(A)  \big)
$$
satisfying
\begin{equation}
\label{eq:dudtfaible}
\frac{\mathrm{d}u}{\mathrm{d}t}=Au,\quad\quad u(0)=u_0.
\end{equation}
where the first equality in \eqref{eq:dudtfaible} has to be understood in the weak sense.

Under the hypothesis of Theorem \ref{Theo:stochastic-representation}, we deduce the following Corollary.
\begin{corollary}
\label{cor:ExfX-u}
Let $0<T<\infty$. Under the conditions of  Theorem \ref{Theo:stochastic-representation}, for any $u_0\in \cD(A)$, we have 
\begin{equation}
\label{eq:ExfX-u}
E^x[u_0(X_t)]=u(t,x),\quad \forall t\in[0,T],\;\forall x\in\R^d,
\end{equation}
where $X$ is the diffusion considered in Theorem \ref{Theo:stochastic-representation} and $u$ is the solution of 
\eqref{eq:dudtfaible}.

In particular, the following transmission condition
\begin{equation}
\label{eq:transmission}\langle \varepsilon_+\nabla_x u_{+}(t,y)- \varepsilon_{-}\nabla_x u_{-}(t,y), \nu(y)\rangle = 0,\;\;\text{for\;{\it a.e.}}\;\; (t,y)\in(0,T]\times\Gamma\quad (\star)
\end{equation}
is satisfied.
\end{corollary}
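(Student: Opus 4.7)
For the first identity \eqref{eq:ExfX-u}, the plan is to invoke the Hille--Yosida representation of the classical solution of \eqref{eq:dudtfaible}: since $u_0\in\cD(A)$, the unique solution satisfies $u(t,\cdot)=T_tu_0$ in $L^2(\R^d)$ for every $t\in[0,T]$. Combining this with identity \eqref{eq:Tt-X} immediately gives $E^x[u_0(X_t)]=T_tu_0(x)=u(t,x)$ as functions of $x$, initially in the $L^2$ sense. To upgrade this to a pointwise identity valid for every $x\in\R^d$, I would appeal to the continuous transition density provided by \eqref{eq:fundamental-solution} together with Aronson's Gaussian bounds \eqref{eq:Aronson-estimates}: both sides can be written as $\int_{\R^d}p(t,x,y)u_0(y)\,dy$, which, by dominated convergence and the pointwise continuity of $p(t,\cdot,y)$, is a continuous function of $x$ for each fixed $t>0$.

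For the transmission condition \eqref{eq:transmission}, the crucial point is that $u(t,\cdot)\in\cD(A)$ for every $t\in(0,T]$. By the characterization of $\cD(A)$ obtained in Section \ref{ssec-contruction}, this means $u(t,\cdot)\in H^1(\R^d)$ and $Au(t,\cdot)\in L^2(\R^d)$, together with the duality identity
\[
\cE(u(t,\cdot),v)=-\langle Au(t,\cdot),v\rangle_{L^2(\R^d)}\quad\forall v\in C_c^\infty(\R^d).
\]
Next, using that $a_\pm\in C_b^1(\overline{D}_\pm;\R)$ and that $\Gamma$ is bounded and of class $C^2$, I would invoke standard elliptic regularity separately on each side (interior regularity plus boundary regularity up to $\Gamma$) to obtain $u_\pm(t,\cdot)\in H^2_{\mathrm{loc}}(\overline{D}_\pm)$, which is precisely the regularity needed to legitimately invoke Proposition \ref{prop:green1} on each subdomain.

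Applying the first Green identity of Proposition \ref{prop:green1} separately to $\cE_+(u_+(t,\cdot),v)$ and $\cE_-(u_-(t,\cdot),v)$ and summing, the two bulk integrals reconstruct $-\langle Au(t,\cdot),v\rangle_{L^2(\R^d)}$ (as $\Gamma$ has zero Lebesgue measure), so only the boundary contribution on $\Gamma$ survives. Comparing with the duality identity above forces
\[
\bigl(\cB^+_\nu u(t,\cdot)-\cB^-_\nu u(t,\cdot),\,{\rm Tr}(v)\bigr)_\Gamma=0,\quad\forall v\in C_c^\infty(\R^d).
\]
Since the traces of $C_c^\infty(\R^d)$ functions are dense in $H^{1/2}(\Gamma)$, I would conclude $\cB^+_\nu u(t,\cdot)=\cB^-_\nu u(t,\cdot)$ in $H^{-1/2}(\Gamma)$ for every $t\in(0,T]$, which is exactly \eqref{eq:transmission} interpreted in this distributional sense, hence a.e.\ on $\Gamma$.

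The main technical obstacle in this plan is the justification of $H^2$ regularity up to $\Gamma$ from each side of the interface, which is what makes Proposition \ref{prop:green1} directly applicable; this relies on classical elliptic regularity theory for the uniformly elliptic operators $A_\pm$ with $C^1_b$ coefficients on the $C^2$ domains $D_\pm$. Once this regularity is in hand, everything else in the argument is essentially a careful assembly of the semigroup identification \eqref{eq:Tt-X}, the continuity provided by \eqref{eq:fundamental-solution}--\eqref{eq:Aronson-estimates}, and the characterization of $\cD(A)$ through $\cE$, all of which were established earlier in the paper.
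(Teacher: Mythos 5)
Your argument for \eqref{eq:ExfX-u} is the same as the paper's: the paper simply notes that $T_tu_0$ solves \eqref{eq:dudtfaible} (citing Pazy) and invokes \eqref{eq:Tt-X}; your extra care in upgrading the $L^2$ identity to a pointwise one via the continuous kernel $p$ and the bounds \eqref{eq:Aronson-estimates} is a welcome refinement of the same route. For the transmission condition, however, the paper gives no argument at all --- it defers entirely to Proposition 3.14 and Theorem 3.1 of \cite{Etore-Martinez-2020} --- whereas you supply the proof: $u(t,\cdot)\in\cD(A)$, two applications of Proposition \ref{prop:green1}, comparison with $\cE(u(t,\cdot),v)=-\langle Au(t,\cdot),v\rangle$, and density of the traces of $C_c^\infty(\R^d)$ in $H^{1/2}(\Gamma)$. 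That skeleton is exactly right and is what the cited reference carries out; it buys the reader a self-contained justification that the paper chose to outsource.

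The one step you should sharpen is the regularity claim $u_\pm(t,\cdot)\in H^2_{\mathrm{loc}}(\overline{D}_\pm)$. ``Interior regularity plus boundary regularity up to $\Gamma$'' applied separately on each side is not directly available, because $u_+$ (say) solves $A_+u_+\in L^2(D_+)$ on $D_+$ with no prescribed boundary datum on $\Gamma$ --- its trace is only known a priori to lie in $H^{1/2}(\Gamma)$, which is not enough for one-sided $H^2$ regularity up to the boundary. What is needed (and true) is the \emph{two-sided} elliptic interface regularity result: for $u\in H^1(\R^d)$ with $\sum_{i,j}D_i(a_{ij}D_ju)\in L^2(\R^d)$ and $a$ piecewise $C^1$ across a $C^2$ interface, one flattens $\Gamma$ locally, uses tangential difference quotients on the \emph{global} equation to control all tangential second derivatives of $u$ on both sides simultaneously, and then recovers the normal--normal second derivative on each side from the equation restricted to $D_\pm$. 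With that substitution (and a cutoff near the bounded $\Gamma$ so that Proposition \ref{prop:green1}, stated for $u_\pm\in H^2(D_\pm)$, applies), your argument is complete.
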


\begin{proof}
In view of \eqref{eq:Tt-X} and since $\frac{d}{dt} T_tu_0=A T_t u_0$  (\cite{pazy} Thm 2.4-c)) the function $(t,x)\mapsto E^x[u_0(X_t)]$ is solution of \eqref{eq:dudtfaible}. We refer to the proofs of Proposition 3.14 and Theorem 3.1 in \cite{Etore-Martinez-2020} for the verification of the other assertions.
\end{proof}

In the light of \eqref{eq:ExfX-u} and in order to compute an approximate value of $u(t,x)$, one could think of producing a Monte Carlo method. Our article \cite{Etore-Martinez-2020} is an attempt to tackle this issue. 


\subsection{The diagonal case : link with the results of Bossy \& al. \cite{Bossy-al-2010}}
\label{sec:bossy-al}

We wish to compare the result of our Theorem \ref{Theo:stochastic-representation} with the ones in~\cite{Bossy-al-2010}. For this purpose we restrict once more the assumption on the diffusion coefficient $a(x)$ given in the setting of the preceding subsection. Namely we assume 
$$a(x)=I_d\,\,\varepsilon(x)$$
with $\varepsilon(x):=\big[  \varepsilon_+\1_{\bar{D}_+}+\varepsilon_{-} \1_{D_-} \big](x)$, $I_d$ the identity matrix and $\varepsilon_+\neq \varepsilon_{-}\in\R_+^*$. 
We are therefore considering the case of a diagonal diffusion matrix. We will also assume in the forthcoming theorems that $\Gamma$ is bounded and closed, and will consider that $D_+$ is the interior domain delimited by $\Gamma$.

We start by summing up the notions and results in \cite{Bossy-al-2010} that we need in order to do the comparison.

In the sequel we denote $(C,\cC)$ the usual canonical space, i.e. $C=C([0,\infty);\R^d)$ and $\cC=\cB(C)$ (see Pb 2.4.2 in~\cite{kara} for details).
We also denote  $(\cC_t)_{t\geq 0}$ the usual canonical filtration (see Eq. (5.3.19) in \cite{kara}).

We shall denote by $\omega$ the canonical process defined on $(C,\cC)$. Note that $\omega=(\omega_t)_{t\geq 0}$ is $(\cC_t)$-adapted.

Let us define the transmission operator ${\cal L}$ acting on functions $\varphi\in C(\R^d)\cap C^2(\bar{D}_+)\cap C^2(\bar{D}_-)$ by 
\begin{align}
\label{eq:def-cL-2}
{\cal L}\varphi(x) &= \sum_{i,j=1}^d \partial_{i}\pare{\croc{I_d\,\varepsilon}_{ij}\partial_j\varphi}(x) = \sum_{i=1}^d \varepsilon(x)\,\partial^2_{ii}\varphi(x),\;\;\forall x\in \R^d\setminus \Gamma,\;\;\nonumber\\
{\cal L}\varphi(x) &=\delta(x),\;\;\forall x\in \Gamma
\end{align}
where for any $x\in \Gamma$, $\delta(x)\in \R$ is an arbitrary value of no importance in our computations.

\begin{definition}[\cite{Bossy-al-2010}]
i ) A family of probability measures $(\P^x)_{x\in\R^d}$ on $(C,\cC)$ solves the martingale problem for the operator $\cL$ if, for all $x\in\R^d$ one has 
$\P^x(\omega(0)=x)=1$ and, for all $\varphi$ satisfying
\begin{equation}
\label{eq:dom-phi}
\varphi\in C_b(\R^d)\cap C^2_b(\bar{D}_+)\cap C^2_b(\bar{D}_-)
\end{equation}
\begin{equation}
\label{eq:trans-phi}\langle \varepsilon_+\nabla_x \varphi_{+}(y)- \varepsilon_{-}\nabla_x \varphi_{-}(y), \nu(y)\rangle = 0,\;\;\forall y\in\Gamma,\quad (\star)
\end{equation}
\end{definition}
one has that the $t$-indexed process defined by
$$
\varphi(\omega_t)-\varphi(\omega_0)-\int_0^t\cL\varphi(\omega_s)ds,\quad\forall t\geq 0
$$
is a $(\cC_t)$-martingale under $\P^x$.

ii) The martingale problem is said to be well-posed if there exists a unique family of probability measures~$(\P^x)_{x\in\R^d}$ which solves the martingale problem for the operator $\cL$.

\begin{theorem}[Theorems 2.4, 2.10 and 2.12 in \cite{Bossy-al-2010}]
\label{thm:bossy1}
For all $x\in\R^d$ consider the SDE
\begin{equation}
\label{eq:eds-bossy}
\left\{
\begin{array}{lll}
X_t&=&\ds x+\int_0^t\sqrt {2\varepsilon(X_u)}dB_u+\frac{\varepsilon_+-\varepsilon_{-}}{2\varepsilon_{-}}\int_0^t\nu(X_u)dL^0_u(Y)\\
\\
Y_t&=&\rho(X_t)\\
\end{array}
\right.
\end{equation}
where $B$ is a $d$-dimensional Brownian motion, $\rho(z)$ is the distance from $z\in\R^d$ to the boundary $\Gamma$ and $L^0_t(Y)$ stands for the (right) local time at point zero of the local martingale $Y$.

If $\Gamma$ is of class $C^3$ and compact we have:

i) There exists a weak solution to \eqref{eq:eds-bossy}. For any $x\in\R^d$ consider the law $\P^x$ of this weak solution on $(C,\cC)$. The family $(\P^x)$ is a solution to the martingale problem for $\cL$.

ii) Conversely let $(\P^x)$ be a solution to the martingale problem for $\cL$, and let $x\in\R^d$. There exists a $(\cC_t)$-Brownian motion $B$ under $\P^x$, such that $\omega$ is a weak solution to \eqref{eq:eds-bossy} driven by $B$, under~$\P^x$.
\end{theorem}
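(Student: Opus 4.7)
The plan is to derive both parts of Theorem \ref{thm:bossy1} from Theorem \ref{Theo:stochastic-representation} specialized to the diagonal configuration $a = I_d\,\varepsilon$. In this setting $\sigma = \sqrt{2\varepsilon}\,I_d$, $\sum_j\partial_j a_{kj} = \partial_k\varepsilon \equiv 0$ on $D = D_+\cup D_-$ (since $\varepsilon$ is piecewise constant), and $\gamma_\pm = \varepsilon_\pm\nu$, so the decomposition \eqref{eq:EDS-Hunt} collapses to
\[
X_t^k = x_k + \int_0^t \sqrt{2\varepsilon(X_s)}\,dW_s^k + \frac{\varepsilon_+ - \varepsilon_-}{2}\int_0^t \nu_k(X_s)\,dK_s, \qquad k\in\{1,\dots,d\},
\]
which is already of the same shape as \eqref{eq:eds-bossy}; what remains is to take $B=W$ and to identify the local time $L^0_t(Y)$ of $Y_t=\rho(X_t)$ at zero with $\varepsilon_- K_t$.

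For part (i), we take as candidate weak solution the Hunt process $X$ built in Theorem \ref{Theo:stochastic-representation} and set $Y_t := \rho(X_t)$. Since $\Gamma$ is compact and $C^3$, the signed distance $\rho$ is $C^2$ in a tubular neighbourhood of $\Gamma$ with $\nabla\rho = \nu$ on $\Gamma$; Itô's formula shows that $Y$ is a continuous semimartingale with $d\langle Y\rangle_t = 2\varepsilon(X_t)\,dt$. Both $L^0(Y)$ and $K$ are continuous additive functionals supported on $\{X\in\Gamma\}$, hence by the one-to-one Revuz correspondence their Revuz measures are proportional multiples of the surface measure $\varsigma$ on $\Gamma$; a local flattening of $\Gamma$ by a $C^2$ diffeomorphism reduces the identification to a half-space computation and yields $L^0_t(Y) = \varepsilon_- K_t$. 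Substitution then reproduces \eqref{eq:eds-bossy}. For the martingale property, let $\varphi$ satisfy \eqref{eq:dom-phi} and $(\star)$; a Meyer-Itô formula applied separately on $D_+$ and $D_-$ and patched across $\Gamma$ produces the drift $\cL\varphi = \varepsilon\Delta\varphi$ on $D$ plus a boundary term proportional to $\langle\varepsilon_+\nabla\varphi_+ - \varepsilon_-\nabla\varphi_-,\nu\rangle\,dK_s$, which vanishes by the transmission condition. Hence $\varphi(X_t)-\varphi(X_0)-\int_0^t\cL\varphi(X_s)\,ds$ is a local martingale, upgraded to a true martingale by the boundedness of $\varphi$ and $\cL\varphi$ on $\bar D_\pm$.

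For part (ii), the key step is well-posedness of the martingale problem. Given any solution $(\P^x)_{x\in\R^d}$, the semigroup $t\mapsto E^x[\varphi(\omega_t)]$ it generates coincides with $(T_t)$ on the class of test functions satisfying \eqref{eq:dom-phi} and $(\star)$: indeed, by Corollary \ref{cor:ExfX-u}, $(t,x)\mapsto E^x[\varphi(\omega_t)]$ solves \eqref{eq:dudtfaible} with initial condition $\varphi\in\cD(A)$, the transmission condition $(\star)$ being precisely the interface condition encoded in $\cD(A)$, so uniqueness in Hille-Yosida forces the identification. Any solution to the martingale problem is therefore the law of the Hunt process of Theorem \ref{Theo:stochastic-representation}, so the SDE of part (i) applies: $B$ is obtained by the martingale representation theorem applied to the martingale part of the coordinate semimartingale $X^k$, and $L^0(Y)$ is read off from Tanaka's formula for $\rho(X)$ as above.

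The main obstacle I expect is the precise identification $L^0_t(Y)=\varepsilon_- K_t$, since Tanaka's symmetric local time and the Revuz PCAF $K$ are normalised by different conventions; pinning down the factor $\varepsilon_-$ requires the local $C^2$ flattening of $\Gamma$ together with careful bookkeeping of the one-sided variance of $Y$ on the $D_-$ side. A secondary technical point is the generalised Itô formula for a function $\varphi$ that is only piecewise $C^2$ across a curved interface, which is also handled by the same flattening argument, reducing it to the classical semimartingale local-time calculus on a half-space.
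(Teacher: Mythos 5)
First, a structural remark: the paper does not prove this statement at all --- it is quoted from \cite{Bossy-al-2010} (their Theorems 2.4, 2.10 and 2.12) and used as an external input. So your attempt is necessarily a ``different route''; the question is whether it is self-contained and non-circular within this paper's logic. It is not. Your proof of part (i) hinges on the identification $L^0_t(Y)=\varepsilon_- K_t$, which you flag as the main obstacle and then dispatch by asserting that a local flattening of $\Gamma$ ``yields'' it. But in the paper this identification, Eq.\ \eqref{eq:identif-tps-loc}, is a \emph{corollary} obtained by combining Theorem \ref{thm:bossy1}(ii) with Theorem \ref{thm:X-resout-pb-mart} and the uniqueness of the semimartingale decomposition --- i.e.\ it is downstream of the very statement you are proving. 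To use it as an ingredient you would have to compute the Revuz measure of $L^0(\rho(X))$ directly, and the constant is genuinely delicate: the right local time of $Y=\rho(X)$ at $0$ is the occupation density of $\langle Y\rangle$ on $\{Y>0\}=D_+$, where $d\langle Y\rangle_s=2\varepsilon_+\,ds$, and the factor $\varepsilon_-$ (rather than $\varepsilon_+$ or some average) emerges only after accounting for the asymmetric excursion behaviour of $X$ across $\Gamma$; this computation is the actual content of the proof in \cite{Bossy-al-2010} and is absent here. Likewise, the ``Meyer--It\^o formula patched across a curved interface'' that you invoke for the drift is precisely what the paper avoids by going through Theorem 5.5.5 of \cite{Fukushima-et-al-2011} and the Green identity (see the proof of Theorem \ref{thm:X-resout-pb-mart}); for a function that is only piecewise $C^2$ across a hypersurface and a semimartingale whose bounded-variation part charges that hypersurface, such a formula is a claim requiring proof, not a tool.

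Part (ii) has a second gap. You reduce it to well-posedness of the martingale problem and justify uniqueness by appealing to Corollary \ref{cor:ExfX-u} and Hille--Yosida. But Corollary \ref{cor:ExfX-u} concerns the specific Hunt process of Theorem \ref{Theo:stochastic-representation}, not an arbitrary solution $(\P^x)$ of the martingale problem. To show that an arbitrary solution has marginals $T_t\varphi$ you must verify that $(s,y)\mapsto u(t-s,y)$ is an admissible test function, i.e.\ that the solution of \eqref{eq:dudtfaible} is $C^{1,2}_b$ up to $\Gamma$ from each side and satisfies $(\star)$ pointwise; this parabolic regularity for the transmission problem is the analytic core of the uniqueness result in \cite{Bossy-al-2010} (their Theorem 2.14, recalled here as Theorem \ref{thm:bossy2}) and does not follow from the $L^2$ semigroup theory. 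Note finally that part (ii) as stated does not presuppose well-posedness: in \cite{Bossy-al-2010} it is proved directly by testing the martingale problem against suitably corrected coordinate functions, so routing it through uniqueness both imports an unproved theorem and proves a weaker statement than the one asserted.
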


\begin{remark}
Note a change of sign in the weight in front of the local time term in \eqref{eq:eds-bossy}, compared to \cite{Bossy-al-2010}. This is because for us the exterior normal to the interior domain $D_+$ is $-\nu$.
\end{remark}

\begin{remark}
Note that in \cite{Bossy-al-2010} the authors work with right local time at point zero of $Y=\rho(X)$. Working with the symmetric local time instead, as it is often the case in the study of asymetric diffusions (e.g. \cite{Lejay-2006}) would lead to different coefficients in front of the local time term.
\end{remark}

\begin{theorem}[Theorem 2.14 in \cite{Bossy-al-2010}]
\label{thm:bossy2}
Assume $\Gamma$ is of class $C^3$ and compact. Then the martingale problem for $\cL$ is well posed and in particular there is a unique weak solution to \eqref{eq:eds-bossy} in the sense of probability law.
\end{theorem}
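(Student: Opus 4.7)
The plan is to prove well-posedness by separately establishing existence and uniqueness, using as main inputs the Skorokhod decomposition of Theorem~\ref{Theo:stochastic-representation} and the PDE well-posedness that underlies Corollary~\ref{cor:ExfX-u}.

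\textbf{Existence.} Starting from the Hunt process $X$ of Theorem~\ref{Theo:stochastic-representation}, I would show that its law $(\P^x)_{x\in\R^d}$ on $(C,\cC)$ solves the martingale problem for $\cL$. In the diagonal case $a=I_d\,\varepsilon$, $\varepsilon$ is locally constant on $D$ so the bounded-variation drift $\partial_j a_{kj}\,\indi{X_s\in D}$ in \eqref{eq:EDS-Hunt} vanishes, $\sigma=\sqrt{2\varepsilon}\,I_d$, and $\gamma_\pm=\varepsilon_\pm\nu$, giving
\begin{equation*}
X_t = x + \int_0^t \sqrt{2\varepsilon(X_s)}\,dW_s + \frac{\varepsilon_+ - \varepsilon_-}{2}\int_0^t \nu(X_s)\,dK_s.
\end{equation*}
To match this with \eqref{eq:eds-bossy}, the next step is to identify $\varepsilon_-K_t$ with $L^0_t(\rho(X))$: applying It\^o--Tanaka to $Y=\rho(X)$ (the signed distance is $C^3$ in a tubular neighbourhood of $\Gamma$ thanks to the $C^3$ regularity of the interface) and recognizing, via Revuz correspondence, that the local-time measure of $Y$ at $0$ coincides with $\varepsilon_-\varsigma$ yields the desired identification. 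Finally, for any test function $\varphi$ satisfying \eqref{eq:dom-phi}--\eqref{eq:trans-phi}, piecewise It\^o's formula on $\bar D_+$ and $\bar D_-$ combined with the cancellation provided by $(\star)$ shows that $\varphi(X_t)-\varphi(x)-\int_0^t \cL\varphi(X_s)\,ds$ is a martingale, so $(\P^x)$ solves the martingale problem.

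\textbf{Uniqueness.} Let $(\mathbb{Q}^x)$ be another solution of the martingale problem. I would establish uniqueness of the one-dimensional marginals by a duality argument with the parabolic transmission PDE, then conclude via a Markov-type propagation. Fix $T>0$ and $u_0\in\cD(A)$, and let $u\in C^1([0,T];L^2(\R^d))\cap C([0,T];\cD(A))$ be the unique solution to \eqref{eq:dudtfaible}. Setting $\psi(t,y):=u(T-t,y)$, $\psi$ solves the backward equation $\partial_t\psi+A\psi=0$ together with the transmission condition $(\star)$. Once $\psi$ has been upgraded to the class $C_b([0,T]\times\R^d)\cap C^{1,2}_b([0,T]\times\bar D_+)\cap C^{1,2}_b([0,T]\times\bar D_-)$ required by the martingale problem, applying It\^o's formula to $\psi(t,\omega_t)$ under $\mathbb{Q}^x$ and taking expectations at $t=T$ gives
\begin{equation*}
E^{\mathbb{Q}^x}[u_0(\omega_T)] = u(T,x),
\end{equation*}
which is independent of the choice of solution. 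A density argument, combined with the Feller character of $(T_t)$ inherited from \eqref{eq:equality-sg} and the Aronson bounds \eqref{eq:Aronson-estimates}, extends the identification to a measure-determining class, so that the one-dimensional marginals under $\mathbb{Q}^x$ are uniquely determined. Uniqueness of all finite-dimensional marginals then follows by the classical Stroock--Varadhan regular-conditional-probability argument: a regular version of the conditional law of $(\omega_{s+t})_{t\geq 0}$ given $\cC_s$ is again a solution of the martingale problem started from $\omega_s$, so one-dimensional uniqueness propagates inductively along any time grid. Since laws on $(C,\cC)$ are determined by their finite-dimensional marginals, $\mathbb{Q}^x=\P^x$ for every $x$, and the uniqueness in law of \eqref{eq:eds-bossy} then follows from Theorem~\ref{thm:bossy1}~ii).

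\textbf{Main obstacle.} The critical technical point is the regularity upgrade of $\psi$: the Hille--Yosida framework only gives $u(t,\cdot)\in\cD(A)\subset H^1(\R^d)$, which is too weak to feed into It\^o's formula in the class \eqref{eq:dom-phi}. Obtaining classical $C^{1,2}$ regularity on each side of $\Gamma$, with the transmission condition $(\star)$ realized in the strong sense, requires parabolic Schauder-type estimates on each subdomain $D_\pm$ (where $\varepsilon$ is constant so the equation is just the heat equation), matched across $\Gamma$ through $(\star)$ and the $C^3$ regularity of the interface (which gives smoothness of the distance function and of local straightenings of $\Gamma$). This is precisely where the strengthened assumption ``$\Gamma$ of class $C^3$ and compact'' in Theorem~\ref{thm:bossy2} is invested, as opposed to the mere $C^2$ hypothesis in Theorem~\ref{Theo:stochastic-representation}.
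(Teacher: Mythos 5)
The first thing to say is that the paper contains no proof of this statement: Theorem~\ref{thm:bossy2} is quoted verbatim as Theorem~2.14 of \cite{Bossy-al-2010} and is used as an external input, so there is no internal argument to compare yours against. Judged on its own merits, your strategy (existence via an explicit solution, uniqueness via duality with the parabolic transmission PDE plus the Stroock--Varadhan regular-conditional-probability propagation) is the standard and essentially correct route, and it is close in spirit to what is actually done in \cite{Bossy-al-2010}. But two points deserve flagging.

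First, your existence step is circular relative to the logical architecture of the present paper. You identify $\varepsilon_- K_t$ with $L^0_t(\rho(X))$ by an It\^o--Tanaka/Revuz computation that you only assert; in the paper this identification, Eq.~\eqref{eq:identif-tps-loc}, is \emph{derived} in the final corollary as a consequence of Theorem~\ref{thm:bossy2} (via Theorem~\ref{thm:bossy1}~ii) and uniqueness of the semimartingale decomposition), not used to prove it. If you want to run the argument in your direction you must actually compute the Revuz measure of the right local time of $Y=\rho(X)$ at $0$ and show it equals $\varepsilon_-\varsigma$; this is a genuine computation (the asymmetric occupation densities of $Y$ on either side of $0$ enter, which is where the factor $\varepsilon_-$ rather than $(\varepsilon_++\varepsilon_-)/2$ comes from), and skipping it leaves a hole. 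Note also that the paper proves the martingale-problem existence statement separately and without any local-time identification (Theorem~\ref{thm:X-resout-pb-mart}, via Theorem~5.5.5 of \cite{Fukushima-et-al-2011} applied directly to test functions satisfying \eqref{eq:dom-phi}--\eqref{eq:trans-phi}); that is the cleaner path. Second, and more seriously, the entire weight of the uniqueness proof rests on the ``regularity upgrade'' of $\psi=u(T-\cdot,\cdot)$ to the class $C_b\cap C^{1,2}_b(\bar D_+)\cap C^{1,2}_b(\bar D_-)$ with $(\star)$ holding pointwise, and you explicitly defer this. The Hille--Yosida output $u(t,\cdot)\in\cD(A)\subset H^1(\R^d)$ is indeed far too weak, and the passage to classical two-sided regularity for the transmission problem (Schauder estimates in each subdomain matched across a $C^3$ interface, plus an extension of the martingale property to time-dependent test functions) is precisely the content that makes Theorem~2.14 of \cite{Bossy-al-2010} a theorem rather than a remark. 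As written, your proposal identifies the right obstacle but does not overcome it, so the uniqueness half remains a sketch rather than a proof.
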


We then have the following main result.

\begin{theorem}
\label{thm:X-resout-pb-mart}
Assume $\Gamma$ is of class $C^2$ and compact. Consider the Hunt process $X$ ($\M$) in Theorem \ref{Theo:stochastic-representation}. For any starting point
$x\in\R^d$ consider the law $\P^x$ of $X$ on $(C,\cC)$. The family $(\P^x)$ is a solution to the martingale problem for $\cL$.
\end{theorem}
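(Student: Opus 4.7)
The plan is to mimic the Fukushima-decomposition strategy used in the proof of Theorem~\ref{Theo:stochastic-representation}, but to apply it directly to each admissible test function $\varphi$ of the martingale problem instead of to the coordinate functions $p_k$. First, I fix $\varphi\in C_b(\R^d)\cap C^2_b(\bar{D}_+)\cap C^2_b(\bar{D}_-)$ satisfying the co-normal condition $(\star)$. Since $\varphi$ is continuous across $\Gamma$ with bounded classical gradients on each side, its distributional gradient on $\R^d$ coincides $\ell$-a.e.\ with the classical gradient on $D$; hence $\varphi\in H^1(\R^d)\cap L^\infty(\R^d)\subset\cD\croc{\cE}_{b,\text{loc}}$ and $\varphi$ is trivially finely continuous. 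The energy measure $\mu_{\langle\varphi\rangle}(dy)=2\varepsilon(y)|\nabla\varphi(y)|^2\,\ell(dy)$ has bounded density, so $\indi{G}.\mu_{\langle\varphi\rangle}\in S_{00}$ for any relatively compact open $G$, by the same arguments already used in the proof of Theorem~\ref{Theo:stochastic-representation}.

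The key step is to identify the signed Revuz measure $\varrho$ such that $\cE(\varphi,v)=(\varrho,v)$ for every $v\in C^\infty_c(\R^d)$. Splitting the energy integral along $\Gamma$ and applying the first Green identity (Proposition~\ref{prop:green1}) on each subdomain gives
\begin{equation*}
\cE(\varphi,v)=\int_{D}(-\cL\varphi)(y)\,v(y)\,dy-\int_\Gamma\langle\varepsilon_+\nabla\varphi_+-\varepsilon_-\nabla\varphi_-,\nu\rangle\,v\,d\varsigma,
\end{equation*}
and the surface integral vanishes identically by the transmission condition $(\star)$. Hence $\varrho=-\cL\varphi\cdot\indi{D}\cdot\ell$, a signed measure with bounded density; its positive and negative parts evidently satisfy $\indi{G}.\varrho^{(\pm)}\in S_{00}$ by the boundedness of $\cL\varphi$ on $D$ and the Aronson bounds on $r_1$. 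All hypotheses of Theorem~5.5.5 of \cite{Fukushima-et-al-2011} being met, I obtain on $\{t\leq\tau_{G_q}\}$ (and globally by letting $q\to\infty$, exactly as at the end of the proof of Theorem~\ref{Theo:stochastic-representation})
\begin{equation*}
\varphi(X_t)-\varphi(X_0)=M_t^{[\varphi]}+\int_0^t\cL\varphi(X_s)\,\indi{X_s\in D}\,ds,\quad P^x\text{-a.s.},\;\forall x\in\R^d,
\end{equation*}
where $M^{[\varphi]}$ is a local martingale additive functional in the strict sense. Aronson's estimates~\eqref{eq:Aronson-estimates} and $\ell(\Gamma)=0$ force $\int_0^t\indi{X_s\in\Gamma}\,ds=0$ $P^x$-a.s., so the indicator $\indi{X_s\in D}$ may be dropped and the integrand rewritten as $\cL\varphi(X_s)$ with $\cL\varphi$ on $\Gamma$ defined arbitrarily.

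To upgrade $M^{[\varphi]}$ to a genuine martingale, I would exploit that both $\varphi$ and $\cL\varphi$ are bounded: on any $[0,T]$, each localized process $M^{[\varphi]}_{\cdot\wedge\tau_{G_q}}$ is uniformly bounded by $2\|\varphi\|_\infty+T\|\cL\varphi\|_\infty$, so dominated convergence allows passage to the limit $q\to\infty$ in the identity $E^x[M^{[\varphi]}_{t\wedge\tau_{G_q}}\mid\cF_s]=M^{[\varphi]}_{s\wedge\tau_{G_q}}$, yielding a true $(\cF_t)$-martingale. The property transfers verbatim to the canonical filtration $(\cC_t)$ under $\P^x$, since by construction $\P^x$ is the image law of $X$ under $P^x$ and all processes in sight are functionals of the path.

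The step I expect to be the main obstacle is the identification of $\varrho$: one must check that the transmission condition $(\star)$ is exactly what annihilates the boundary integral produced by the two applications of Green's identity, and that the coefficient regularity (here $\varepsilon$ piecewise constant, $\Gamma$ of class $C^2$, $\varphi$ only piecewise $C^2_b$) indeed lies within the scope of Proposition~\ref{prop:green1}. Once the identification $\cE(\varphi,v)=-(\cL\varphi,v)$ is secured, the rest is a standard combination of Fukushima's decomposition and a localization argument.
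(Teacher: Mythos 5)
Your proposal follows essentially the same route as the paper's proof: apply Theorem 5.5.5 of \cite{Fukushima-et-al-2011} directly to the test function $\varphi$, use the first Green identity together with the transmission condition $(\star)$ to identify the Revuz measure as $-\cL\varphi\,\indi{D}\,\ell$ (the boundary term cancelling exactly), verify the $S_{00}$ conditions via Aronson's estimates, and conclude that $\varphi(X_t)-\varphi(x)-\int_0^t\cL\varphi(X_s)\,ds$ is a martingale before transferring to the canonical space. The argument is correct; your explicit localization/bounded-convergence step for upgrading $M^{[\varphi]}$ to a true martingale is only sketched in the paper (by reference to the $M^{[f_k]}$'s), but it is the intended justification.
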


\begin{proof}
 Let $\M = \pare{\Omega, ({\cal F}_t)_{t\geq 0}, {\cal F}, (X_t)_{t\geq 0}, (P^x)_{x\in \R^d}}$  the Hunt process considered in Theorem \ref{Theo:stochastic-representation}. Let $x\in\R^d$ and let  $\varphi$ satisfying~\eqref{eq:dom-phi} and \eqref{eq:trans-phi}. We aim at applying Theorem 5.5.5. in \cite{Fukushima-et-al-2011} with $\varphi$ in order to check that
$$
\varphi(X_t)-\varphi(x)- \int_{0}^t  \cL\varphi(X_s)ds
$$
is a martingale under $P^x$ w.r.t  $(\cF_t)$.

Indeed, proceeding as in \cite{kara} p314, this implies that the law $\P^x$ induced on $(C,\cC)$ by $X$ (under $P^x$) is such that if we consider the family $(\P^x)$ this is a solution to the martingale problem for $\cL$.

\vspace{0.1cm}

The fact that $\varphi$ is finely continuous is clear. In order to check that $\varphi\in{\cal D}\croc{\cal E}_{b,\text{loc}}$ it suffices to notice that for any compact $G\subset\R^d$ the functions
$(\varphi\1_G)_+$ and $(\varphi\1_G)_+$ are respectively in $H^1(D+)$ and $H^1(D_-)$. As $\varphi\1_G$ is in $L^2(\R^d)$ and continuous across $\Gamma$, Exercise 4.5 in 
\cite{McLean-2000} implies that $\varphi\1_G\in H^1(\R^d)={\cal D}\croc{\cal E}$.

We now check that $\1_G\cdot\mu_{\langle \varphi\rangle}\in S_{00}$ for any relatively compact set $G$. We first check that $\1_G\cdot\mu_{\langle \varphi\rangle}$ is a positive and finite Radon measure. Indeed one has, as in Eq. \eqref{eq:radon-measure-S},
$$
\1_G\cdot\mu_{\langle \varphi\rangle}(\R^d)=\int_G\mu_{\langle \varphi\rangle}(dx)=2\cE(\1_G\varphi,\varphi)-\cE(\1_G\varphi^2,1)=2\cE(\1_G\varphi,\varphi)
$$
and $0\leq \cE(\1_G\varphi,\varphi)<\infty$. We now check that $||R_1\,\1_G\cdot\mu_{\langle \varphi\rangle}||_\infty<\infty$. For any $x\in\R^d$ we have, with a constants $m,M'$ depending on~$\lambda,\Lambda$,
\begin{equation}
\label{eq:maj-R1etc}
\begin{array}{lll}
|R_1\,\1_G\cdot\mu_{\langle \varphi\rangle}\,(x)|&\leq&\ds\int_0^\infty e^{-t}\Big(\int_{\R^d}p(t,x,y)|\1_G\cdot\mu_{\langle \varphi\rangle}|(dy)\Big)\,dt\\
\\
&\leq&\ds M\int_0^\infty dt\,e^{-t}\int_{\R^d}\frac{1}{t^{d/2}}e^{-\frac{|y-x|^2}{M t}}|\1_G\cdot\mu_{\langle \varphi\rangle}|(dy)\\
\\
&=&\ds  M\int_0^\infty dt\,e^{-t}\int_{G}\frac{1}{t^{d/2}}e^{-\frac{|y-x|^2}{M t}}| \langle 2a(y)\nabla\varphi(y),\nabla\varphi(y) \rangle| dy\\
\\
&=&\ds  M'\int_0^\infty dt\,e^{-t}E^x\big[ |\langle 2a(m W_t)\nabla\varphi(m W_t),\nabla\varphi(m W_t) \rangle| \1_{W_t\in G} \big]\\
\\
&\leq& \ds 2M' \Lambda\sup_{z\in G}|\nabla u(mz)|^2\int_0^\infty dt\,e^{-t}dt\;<\;\infty\\
\end{array}
\end{equation}
Here we have used \eqref{eq:Aronson-estimates} at the second line. At the third line we have used \eqref{eq:radon-measure-S} and computations similar to the ones leading to \eqref{eq:mufk-etc}. At the last line we have used {\rm ({\bf E-B})}. As the bound in \eqref{eq:maj-R1etc} does not depend on $x$ we have proven  $||R_1\,\1_G\cdot\mu_{\langle \varphi\rangle}||_\infty<\infty$.

Let $v\in C^\infty_c(\R^d)$. We have, using in particular Proposition \ref{prop:green1}, the smoothness of $\varphi$, ${\rm Tr}^+(v)={\rm Tr}(v+)={\rm Tr}^-(v)={\rm Tr}(v-)={\rm Tr}(v)$
and \eqref{eq:trans-phi},
$$
\begin{array}{lll}
\cE(\varphi,v)&=&\cE_+(\varphi,v)+\cE_-(\varphi,v)\\
\\
&=&\ds-\int_{D_+}(A_+ \varphi_+)v-\int_{D_-}(A_- \varphi_-)v+\Big( \cB^-_\nu u-\cB^+_\nu u,{\rm Tr}(v)\Big)_\Gamma\\
\\
&=&\ds-\int_D\cL\varphi\,v+\int_\Gamma \langle {\rm Tr}(\varepsilon_+\nabla_x \varphi_{+})- {\rm Tr}(\varepsilon_{-}\nabla_x \varphi_{-}), \nu\rangle\,v\,d\varsigma\\
\\
&=&\ds-\int_D\cL\varphi\,v.
\end{array}
$$
The function $\cL\varphi$ is piecewise continuous and bounded and proceeding as above one may check that the positive and negative parts $(-\cL\varphi)^\pm$ satisfy
$\1_G\cdot\big(  (-\cL\varphi)^\pm dx \big)\in S_{00}$ for any relatively compact set $G$. 

We denote $A^\pm$ the PCAF's related to Revuz measures $(-\cL\varphi)^\pm dx$. We set
$$
N^{[\varphi]}=-A^++A^-$$
and notice that following Example 5.1.1 in \cite{Fukushima-et-al-2011} we have
$N^{[\varphi]}_t=-\int_0^t(-\cL\varphi)^+(X_s)ds+\int_0^t(-\cL\varphi)^-(X_s)ds=\int_0^t\cL\varphi(X_s)ds$
(note that we use the fact that $\int_0^t\1_{X_s\in\Gamma}ds=0$ $\P^x$-a.s., therefore the arbitrary value of $\cL\varphi(z)$ for~$z\in\Gamma$ causes no issue).

We now apply Theorem 5.5.5. in \cite{Fukushima-et-al-2011}. We have
$$
 \varphi(X_t)-\varphi(x)=M^{[\varphi]}_t+\int_0^t\cL\varphi(X_s)ds$$
 with $M^{[\varphi]}$ which a martingale (as for example the $M^{[f_k]}$'s are martingales in the proof of Theorem \ref{Theo:stochastic-representation}). The proof is completed.
\end{proof}

In view of Theorems \ref{thm:bossy1}, \ref{thm:bossy2} and \ref{thm:X-resout-pb-mart} we immediately get the following corollary.

\begin{corollary}
Assume $\Gamma$ is of class $C^3$ and compact.

The solutions of \eqref{eq:EDS-Hunt} and \eqref{eq:eds-bossy}, with starting point $x\in\R^d$, have the same distribution on $(C,\cC)$.

In particular there is uniqueness in the sense of probability law of the weak solutions of \eqref{eq:EDS-Hunt}.
\end{corollary}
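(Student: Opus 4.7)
The plan is to chain Theorems~\ref{thm:bossy1}, \ref{thm:bossy2} and \ref{thm:X-resout-pb-mart}. For the first statement, fix $x\in\R^d$ and denote by $\P^x$ the law on $(C,\cC)$ induced by the Hunt process $X$ of Theorem~\ref{Theo:stochastic-representation} started at $x$. By Theorem~\ref{thm:X-resout-pb-mart} the family $(\P^x)_{x\in\R^d}$ solves the martingale problem associated to $\cL$. On the other hand, Theorem~\ref{thm:bossy1}(i), which applies because $\Gamma$ is $C^3$ and compact, produces a weak solution of \eqref{eq:eds-bossy} whose induced law on $(C,\cC)$ also solves this martingale problem. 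By well-posedness of the martingale problem (Theorem~\ref{thm:bossy2}) these two families of laws must coincide, which establishes the first claim of the corollary.

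For the uniqueness-in-law statement, the idea is to show that any weak solution $X'$ of \eqref{eq:EDS-Hunt} with starting point $x$ likewise induces a law on $(C,\cC)$ that solves the martingale problem for $\cL$; well-posedness will then force this law to coincide with $\P^x$. To this end, for any $\varphi\in C_b(\R^d)\cap C_b^2(\bar D_+)\cap C_b^2(\bar D_-)$ satisfying the transmission condition \eqref{eq:trans-phi}, I would apply an It\^o--Tanaka-type formula to $\varphi(X'_t)$ using the Skorokhod decomposition \eqref{eq:EDS-Hunt}. On $\{X'_s\in D\}$ the quadratic-variation and drift contributions exactly reconstitute $\int_0^t\cL\varphi(X'_s)\,ds$, while on $\{X'_s\in\Gamma\}$ the local-time term contributes $\tfrac{1}{2}\int_0^t\bigl[\varepsilon_+\partial_\nu\varphi_+-\varepsilon_-\partial_\nu\varphi_-\bigr](X'_s)\,dK_s$, which vanishes identically by \eqref{eq:trans-phi}. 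Hence $\varphi(X'_t)-\varphi(x)-\int_0^t\cL\varphi(X'_s)\,ds$ is a martingale, so the law of $X'$ solves the martingale problem for $\cL$.

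The main obstacle is the rigorous justification of that It\^o--Tanaka step, since $\varphi$ is only $C^2$ on each closed side and its second derivatives may jump across~$\Gamma$. I would handle this by smooth approximation of $\varphi$ by functions respecting the transmission condition \eqref{eq:trans-phi}, or by localizing on each side with stopping times and invoking the first Green identity (Proposition~\ref{prop:green1}) to track the boundary contributions: in either approach the net boundary contribution collapses to the one-sided normal-derivative expression above and is killed by~\eqref{eq:trans-phi}. Once this martingale property is secured, Theorem~\ref{thm:bossy2} immediately yields uniqueness in law for \eqref{eq:EDS-Hunt}.
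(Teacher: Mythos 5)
Your first paragraph is exactly the paper's argument: the paper offers no written proof beyond the remark that the corollary follows ``in view of Theorems \ref{thm:bossy1}, \ref{thm:bossy2} and \ref{thm:X-resout-pb-mart}'', i.e.\ the law of the Hunt process solves the martingale problem for $\cL$, the law of the weak solution of \eqref{eq:eds-bossy} solves it too, and well-posedness forces the two laws to coincide. Where you genuinely diverge is on the ``in particular'' clause: the paper treats uniqueness in law for \eqref{eq:EDS-Hunt} as immediate, whereas you correctly observe that the three cited theorems only cover the \emph{particular} solution built in Theorem \ref{Theo:stochastic-representation}, so that uniqueness among \emph{arbitrary} weak solutions requires the additional step that any weak solution of \eqref{eq:EDS-Hunt} induces a law solving the martingale problem. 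Your It\^o--Tanaka sketch of that step is the right idea and is essentially what \cite{Bossy-al-2010} do for \eqref{eq:eds-bossy}; it buys a genuinely stronger (and more honest) conclusion than the paper's one-line derivation. One imprecision worth fixing if you carry it out: the conormal expression $\varepsilon_+\partial_\nu\varphi_+-\varepsilon_-\partial_\nu\varphi_-$ does not come from the local-time drift term alone. The drift term of \eqref{eq:EDS-Hunt} contributes $\tfrac12\int_0^t\langle\gamma_+-\gamma_-,\nabla\varphi\rangle(X'_s)\,dK_s$, while the jump of $\nabla\varphi$ across $\Gamma$ (the singular part of the distributional second derivative of $\varphi$) contributes a second surface term proportional to $\partial_\nu\varphi_+-\partial_\nu\varphi_-$; only the \emph{sum} of the two collapses to the conormal expression killed by \eqref{eq:trans-phi}, exactly as in the Green-identity computation in the proof of Theorem \ref{thm:X-resout-pb-mart}. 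With that bookkeeping made explicit (and the approximation of $\varphi$ justified, e.g.\ by mollifying separately on $\bar D_\pm$ while preserving continuity), your argument is complete.
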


%
%
%
%
%

Going a bit further in the analysis we may do an identification in the strong sense of the terms appearing in \eqref{eq:EDS-Hunt} and \eqref{eq:eds-bossy}.

\begin{corollary}
Assume $\Gamma$ is of class $C^3$ and compact. 

Consider the Hunt process $\M = \pare{\Omega, ({\cal F}_t)_{t\geq 0}, {\cal F}, (X_t)_{t\geq 0}, (P^x)_{x\in \R^d}}$ in Theorem \ref{Theo:stochastic-representation}, which is such that $X$
 solves~\eqref{eq:EDS-Hunt} under $P^x$.

Then one also has
\begin{equation*}
\label{eq:eds-bossy-2}
\left\{
\begin{array}{lll}
X_t&=&\ds x+\int_0^t\sqrt {2\varepsilon(X_u)}dB_u(X)+\frac{\varepsilon_+-\varepsilon_{-}}{2\varepsilon_{-}}\int_0^t\nu(X_u)dL^0_u(Y)\\
Y_t&=&\rho(X_t)\\
\end{array}
\right.
\end{equation*}
with $B(X)$ a Brownian motion under $P^x$ (that is $X$ is also a weak solution to \eqref{eq:eds-bossy}).

Moreover one has $B(X)=W$ (with $W$ the Brownian motion driving \eqref{eq:EDS-Hunt}) and
\begin{equation}
\label{eq:identif-tps-loc}
K_t=\frac{1}{\varepsilon_{-}}L^0_t(\rho(X)),\quad\forall t\geq 0.
\end{equation}
\end{corollary}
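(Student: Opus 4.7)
The plan is to specialise the Skorokhod representation \eqref{eq:EDS-Hunt} to the present diagonal case, produce a second decomposition of $X$ via Theorem \ref{thm:bossy1}(ii), and match the two using the uniqueness of the continuous semimartingale decomposition.

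First I would simplify \eqref{eq:EDS-Hunt} under $a=I_d\varepsilon$: since $\varepsilon$ is locally constant on $D=D_+\cup D_-$, the bulk drift vanishes, $\sum_j \partial_j a_{kj}(x)\mathbf{1}_{x\in D} = \partial_k \varepsilon(x)\mathbf{1}_{x\in D}\equiv 0$, and the co-normal fields reduce to $\gamma_\pm(x)=\varepsilon_\pm\nu(x)$. Thus \eqref{eq:EDS-Hunt} becomes
\begin{equation*}
X_t = x + \int_0^t\sqrt{2\varepsilon(X_s)}\,dW_s + \frac{\varepsilon_+-\varepsilon_-}{2}\int_0^t\nu(X_s)\,dK_s \qquad(\star).
\end{equation*}
Next, Theorem \ref{thm:X-resout-pb-mart} gives that the law of $X$ solves the martingale problem for $\cL$, so Theorem \ref{thm:bossy1}(ii) furnishes, on the canonical space, a $(\cC_t)$-Brownian motion $B = \Psi(\omega)$ (a measurable functional of the path, since it is adapted to the canonical filtration) such that $\omega$ satisfies \eqref{eq:eds-bossy}. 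Setting $B(X) := \Psi(X)$ yields a Brownian motion under $P^x$, adapted to the $P^x$-completion of $(\cF_t^X)$, for which $X$ satisfies \eqref{eq:eds-bossy} on $(\Omega, \cF, P^x)$.

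The heart of the argument is then to invoke uniqueness of the continuous semimartingale decomposition of $X$ in the filtration $(\cF_t^X)^{P^x}$. Both $W$ and $B(X)$ are Brownian motions in this filtration: the non-degeneracy $\sqrt{2\varepsilon(\cdot)}\geq \sqrt{2\lambda}>0$ coming from $({\bf E-B})$ lets us invert the diffusion coefficient and recover each as a deterministic functional of $X$ from the martingale part of $X$, namely $W_t=\int_0^t (2\varepsilon(X_s))^{-1/2}\,dM_s$ and similarly for $B(X)$. So in both $(\star)$ and \eqref{eq:eds-bossy}, the first term is a continuous local martingale in this filtration and the second is continuous and of finite variation. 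Uniqueness then forces
\begin{equation*}
\int_0^t\sqrt{2\varepsilon(X_s)}\,dW_s = \int_0^t\sqrt{2\varepsilon(X_s)}\,dB_s(X), \qquad \frac{\varepsilon_+-\varepsilon_-}{2}\int_0^t\nu(X_s)\,dK_s = \frac{\varepsilon_+-\varepsilon_-}{2\varepsilon_-}\int_0^t\nu(X_s)\,dL^0_s(Y).
\end{equation*}
Dividing the first identity by $\sqrt{2\varepsilon(X_s)}$ (legitimate since it is bounded and bounded away from $0$) yields $W=B(X)$. For the second, since $\varepsilon_+\neq\varepsilon_-$, the prefactor cancels and one obtains $\int_0^t\nu_k(X_s)\,d\mu_s=0$ for each $k$ and each $t\geq 0$, where $d\mu_s:=dK_s-\varepsilon_-^{-1}dL^0_s(Y)$. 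Hence the signed Stieltjes measure $\nu_k(X_s)\,d\mu_s$ vanishes identically for each $k$; both $K$ and $L^0(Y)$ are supported on $\{s:X_s\in\Gamma\}$ where $\sum_k\nu_k(X_s)^2=|\nu(X_s)|^2=1$, so multiplying $\nu_k(X_s)\,d\mu_s=0$ by $\nu_k(X_s)$ and summing over $k$ produces $d\mu_s=0$, i.e.\ $K_t=\varepsilon_-^{-1}L^0_t(Y)$ for every $t\geq 0$.

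The main subtlety I expect is rigorously placing $W$ and $B(X)$ in a common filtration so that decomposition uniqueness is genuinely applicable; this is precisely what forces the appeal to the non-degeneracy of the diffusion coefficient, which lets us express both Brownian motions as deterministic functionals of $X$ alone.
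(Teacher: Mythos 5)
Your proposal is correct and follows essentially the same route as the paper: pass to the canonical space, invoke Theorem \ref{thm:bossy1}(ii) to obtain $B(X)$ as a measurable functional of the path, pull the resulting equation back to $X$, and then match the martingale and finite-variation parts of the two semimartingale decompositions to get $W=B(X)$ and $K=\varepsilon_-^{-1}L^0(\rho(X))$. Your two minor deviations --- placing $W$ and $B(X)$ in the filtration of $X$ via the non-degeneracy of $\sqrt{2\varepsilon}$ before invoking uniqueness, and extracting $dK_s=\varepsilon_-^{-1}dL^0_s$ by multiplying by $\nu_k(X_s)$ and summing $\sum_k\nu_k^2=1$ on $\Gamma$ instead of the paper's componentwise case-splitting on which coordinate of $\nu$ is nonzero --- are harmless and, if anything, slightly cleaner than the printed argument.
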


\begin{proof}
Pick $x\in\R^d$. Let us rewrite \eqref{eq:EDS-Hunt} in the matrix form in the case of interest. One has
$$
X_t=x+\int_0^t\sqrt{2\varepsilon}(X_u)dW_u+\frac 1 2 \int_0^t(\gamma_+(X_u)-\gamma_-(X_u))dK_u
$$
under $P^x$.
We set now
$$
G(X):=X_t-x=\int_0^t\sqrt{2\varepsilon}(X_u)dW_u+\frac 1 2 \int_0^t(\gamma_+(X_u)-\gamma_-(X_u))dK_u
$$
and notice that $G(\cdot)$ is obviously measurable and that 
$$
\Phi(X):=G(X)-X_t-x=0\quad P^x-\text{a.s.}
$$
Noting that $\Phi(\cdot)$ is measurable and remembering the definition of $\P^x$ as the law of $X$ on $(C,\cC)$ under $P^x$ it is obvious that one has
$
P^x(\Phi(X)=0)=1=\P^x(\Phi(\omega)=0)
$.

Therefore under $\P^x$ one has a.s.
$$
G(\omega)=\omega_t-x
$$
but from Theorem \ref{thm:bossy1} Point ii) one has 
\begin{equation}
\label{eq:G1}
G(\omega)=\int_0^t\sqrt {2\varepsilon(\omega_u)}dB_u(\omega)+\frac{\varepsilon_+-\varepsilon_{-}}{2\varepsilon_{-}}\int_0^t\nu(\omega_u)dL^0_u(\rho(\omega))
\end{equation}
where $B(\omega)$ is a $(\cC_t)$-Brownian motion under $\P^x$. In the notation we have stressed that $B(\omega)$ is constructed from the paths of $\omega$ (through a measurable mapping).

Applying $G(\cdot)$ viewed as in \eqref{eq:G1} to $X$ we get the first part of the corollary, that is that $X$ solves \eqref{eq:eds-bossy} \eqref{eq:eds-bossy-2} driven by~$B(X)$ (to check that
$B(X)$ is a B.m. under $P^x$ we have to check that the increments of $B(X)$ are independent and that $B_t(X)-B_s(X)$ for any $s<t$ and distributed as a $\cN_d(0,(t-s)I_d)$, by identifying the law of $X$ under~$P^x$ with the one of $\omega$ under $\P^x$).

Using the uniqueness of the decomposition of a semimartingale and recalling that $\gamma_\pm=a_\pm\nu$ we have
\begin{equation}
\label{eq:indent-mart}
\int_0^t\sqrt{2\varepsilon}(X_u)dW_u=\int_0^t\sqrt {2\varepsilon(X_u)}dB_u(X)\quad\forall t\geq 0
\end{equation}
and
\begin{equation}
\label{eq:indent-VF}
\int_0^t\nu(X_s)dK_s=\int_0^t\nu(X_s)\frac{1}{\varepsilon_{-}}dL^0_s(\rho(X))\quad\forall t\geq 0.
\end{equation}
From \eqref{eq:indent-mart} and \eqref{eq:ell} one has 
$$
0\leq \langle W-B(X)\rangle_t\leq \frac{1}{2 \lambda} \int_0^t2\varepsilon(X_u)d\langle W-B(X)\rangle_u\leq \frac{1}{2 \lambda} \big\langle\int_0^\cdot\sqrt{2\varepsilon}(X_u)d (W-B(X))_u\big)\rangle_t=0
$$
where the bracket has to be understood in the multidimensional sense (matrix of brackets). Therefore $W\equiv B(X)$ using Proposition IV.1.12 in \cite{Revuz-Yor-1999}.

It remains to use the componentwise meaning of \eqref{eq:indent-VF} in order to check \eqref{eq:identif-tps-loc}. For the use of exposure we assume for a while that $d=2$, with $\nu=(\nu_1,\nu_2)^T$. Using the fact that if $\nu_1(y)=0$ then 
$\nu_2(y)\neq 0$, $y\in\Gamma$, we get for any $t\geq 0$,
$$
\begin{array}{lll}
K_t&=&\ds \int_0^t\Big( \1_{\nu_1(X_s)\neq 0}\frac{\nu_1(X_s)}{\nu_1(X_s)}+  \1_{\nu_1(X_s)= 0}\frac{\nu_2(X_s)}{\nu_2(X_s)} \Big) dK_s\\
\\
&=&\ds \int_0^t\Big( \1_{\nu_1(X_s)\neq 0}\frac{\nu_1(X_s)}{\nu_1(X_s)}+  \1_{\nu_1(X_s)= 0}\frac{\nu_2(X_s)}{\nu_2(X_s)} \Big) \frac{1}{\varepsilon_{-}}dL^0_s(\rho(X))\\
\\
&=&\ds \int_0^t\Big( \1_{\nu_1(X_s)\neq 0}+  \1_{\nu_1(X_s)= 0} \Big) \frac{1}{\varepsilon_{-}}dL^0_s(\rho(X))\\
\\
&=&\ds\frac{1}{\varepsilon_{-}}L^0_t(\rho(X))
\end{array}
$$
We claim that the above reasoning can easily be extended to $d>2$. The proof is completed.
\end{proof}





\end{document}